\newcommand{\Z}{\mathbb{Z}}
\newcommand{\Q}{\mathbb{Q}}
\newcommand{\C}{\mathbb{C}}
\DeclareMathOperator{\Jac}{Jac}
\DeclareMathOperator{\USp}{USp}
\DeclareMathOperator{\SU}{SU}
\DeclareMathOperator{\Sp}{Sp}
\DeclareMathOperator{\U}{U}
\DeclareMathOperator{\SL}{SL}
\DeclareMathOperator{\ST}{ST}
\DeclareMathOperator{\Gal}{Gal}
\DeclareMathOperator{\End}{End}
\DeclareMathOperator{\Aut}{Aut}
\DeclareMathOperator{\diag}{diag}
\DeclareMathOperator{\TL}{TL}
\DeclareMathOperator{\AST}{AST}
\DeclareMathOperator{\Hg}{Hg}
\DeclareMathOperator{\LL}{L}
\DeclareMathOperator{\tr}{tr}
\DeclareMathOperator{\MT}{MT}
\newcommand{\matrixtwo}[4]{\begin{bsmallmatrix}#1&#2\\#3&#4\end{bsmallmatrix}}
\newcommand{\greyzero}{\textcolor{gray}{0}}
\newtheorem{theorem}{Theorem}[section]
\newtheorem{example}[theorem]{Example}
\newtheorem{proposition}[theorem]{Proposition}
\newtheorem{lemma}[theorem]{Lemma}
\newtheorem{definition}[theorem]{Definition}
\newtheorem{corollary}[theorem]{Corollary}
\newtheorem{remark}[theorem]{Remark}
\author{Justin Chen}
\address{Department of Mathematics, Brooklyn College, City University of New York; 2900 Bedford Avenue, Brooklyn, NY 11210 USA}
\email{justin.chen39@brooklyn.cuny.edu}
\author{Heidi Goodson}
\address{Department of Mathematics, Brooklyn College, City University of New York; 2900 Bedford Avenue, Brooklyn, NY 11210 USA}
\email{heidi.goodson@brooklyn.cuny.edu}
\author{Rezwan Hoque}
\address{Department of Mathematics, Brooklyn College, City University of New York; 2900 Bedford Avenue, Brooklyn, NY 11210 USA}
\email{rezwan.hoque98@bcmail.cuny.edu}
\author{Sabeeha Malikah}
\address{Department of Mathematics, Brooklyn College, City University of New York; 2900 Bedford Avenue, Brooklyn, NY 11210 USA}
\email{sabeeha.malikah18@bcmail.cuny.edu}
\title{Degeneracy and Sato-Tate groups of $y^2=x^{p^2}-1$}
\begin{document}

\begin{abstract}
    We say that an abelian variety is \emph{degenerate} if its Hodge ring is not generated by divisor classes. Degeneracy leads to some interesting challenges when computing Sato-Tate groups, and there are currently few examples and techniques presented in the literature. In this paper we focus on the Jacobians of the family of curves $C_{p^2}: y^2=x^{p^2}-1$, where $p$ is an odd prime. Using a construction developed by Shioda in the 1980s, we are able to characterize so-called \emph{indecomposable Hodge classes} as well as the Sato-Tate groups of these Jacobian varieties. Our work is inspired by computation, and examples and methods are described throughout the paper.
\end{abstract}

\maketitle

\section{Introduction}

Let $C_{p^2}/\Q$ be the hyperelliptic curve with affine model $y^2=x^{p^2}-1$, where $p$ is an odd prime. One of the main goals of this paper to is to describe the limiting distributions of coefficients of the normalized L-polynomials of curves in this family. These curves have complex multiplication by $\Q(\zeta_{p^2})$ and so, by results of Johansson in \cite{Johansson2017}, we know that the generalized Sato-Tate conjecture holds. Thus, the limiting distributions can be described through a study of the Sato-Tate groups of their Jacobians $\Jac(C_{p^2})$. One challenge to this is that these Jacobian varieties have degenerate Hodge rings by \cite[Theorem 1.1]{GoodsonDegeneracy2024}, and so the standard techniques used in this area of research must be modified. This makes this family of curves particularly interesting in the study of Sato-Tate groups.

Recall that an abelian variety is \emph{degenerate} if its Hodge ring is not generated by divisor classes. The Hodge classes not coming from divisor classes are called \emph{exceptional} and those not coming from Hodge classes in any lower codimension are called \emph{indecomposable}. Degeneracy in the Hodge ring affects several groups associated to an abelian variety, including the Hodge, Mumford-Tate, and Sato-Tate groups (see \cite{GoodsonDegeneracy2024}).

There are currently very few examples of Sato-Tate groups of degenerate abelian varieties in the literature. It was shown in \cite[Theorem 1.1]{GoodsonDegeneracy2024} that the Jacobians of curves of the form $C_m:y^2=x^m-1$ have degenerate Hodge rings when $m$ is an odd composite integer. That paper also explored how degeneracy can affect the Sato-Tate group of $\Jac(C_m)$ by studying the behavior for some small values of $m$. A method for computing Sato-Tate groups was developed in \cite{GalleseGoodsonLombardo2} that can be applied to the Jacobian of any curve in the family $y^2=x^m+1$. In our paper, we specialize to the family $C_{p^2}$ and are able to give an explicit description of the Sato-Tate groups of their Jacobians.

Results of \cite{GalleseGoodsonLombardo1} tell us that the degeneracy of $\Jac(C_{p^2})$ has a somewhat minimal effect on the Sato-Tate group. The indecomposable classes in the Hodge ring affect the Hodge group, which in turn affects the identity component of the Sato-Tate group. However, the component group is in some sense not affected. This is because the connected monomdromy field of $\Jac(C_{p^2})$ is exactly the endomorphism field, i.e. the field of definition of the endomorphisms, of $\Jac(C_{p^2})$. Thus, describing the indecomposable Hodge classes will lead us to a description of the Sato-Tate groups.

In this paper, we use a construction developed by Shioda in \cite{Shioda82} to give a complete description of indecomposable classes in the Hodge ring for the Jacobians of curves in our family $C_{p^2}$. This allows us to give the first explicit description in the literature of Sato-Tate groups of an infinite family of degenerate abelian varieties. Our work is inspired by computation: we implemented results of \cite{Shioda82} in order to study the indecomposable Hodge classes for the curve $C_m$ for odd composite $m$. It was only after computing scores of examples that we discovered that indecomposable classes for $m=p^2$ had a particularly nice and easy-to-describe format  (all of our code and data will be made available upon request). With further computation and study, one might be able to give an explicit description of these classes for more general values of $m$.

\subsection*{Organization of the paper}

In Section \ref{sec:background} we provide some important background information on degeneracy and Sato-Tate groups. We also discuss properties of the curves $C_{p^2}$ and their Jacobians. In Section \ref{sec:degeneracy} we introduce Shioda's construction for studying the Hodge ring of $\Jac(C_{m})$, where $m\geq 3$ is odd. We then give a series of results regarding Shioda's construction and degeneracy in the specific case where $m=p^2$. One of the most important results of this section is Corollary \ref{cor:indecomposableHodgeclasses} where we give a complete description of the indecomposable Hodge classes of $\Jac(C_{p^2})$.

In Section \ref{sec:SatoTateGroup} we apply the results of Section \ref{sec:degeneracy} to determine the identity component and the component group of the Sato-Tate group of $\Jac(C_{p^2})$ (see Theorem \ref{thm:STgroup}). Finally, in Section \ref{sec:momentstatistics} we describe our process for computing moment statistics. We work through an example to show how the degeneracy is taken into account in the computations.

\subsection*{Notation and conventions}

Throughout, let $p$ be an odd prime and $C_{p^2}/\Q$ denote the hyperelliptic curve $y^2=x^{p^2}-1$. We let $\Q(\zeta_{p^2})$ denote the $p^2$-cylcotomic field, where $\zeta_{p^2}:=\exp(2\pi i/p^2)$. For any rational number $x$ whose denominator is coprime to a positive integer $r$, $\langle x \rangle_r$ denotes the unique representative of $x$ modulo $r$ between 0 and $r-1$. 

Let $I$ denote the $2\times 2$ identity matrix and define the matrix
\begin{align*}
    J=\begin{pmatrix}0&1\\-1&0\end{pmatrix}.
\end{align*}

We embed $\U(1)$ in $\SU(2)$ via $u\mapsto U= \diag(u, \overline u)$. 
For any positive integer $n$, define the following subgroup of the unitary symplectic group $\USp(2n)$:
\begin{equation*}
    \U(1)^n:=\left\langle \diag( U_1, U_2,\ldots, U_n)\;|\; U_i\in \U(1)\right\rangle.
\end{equation*}

For an abelian variety $A$ defined over a number field $K$, we denote the Sato-Tate group of $A$ by $\ST(A) := \ST(A_K)$ with identity component denoted $ST^0(A)$ and component group $\ST(A)/\ST^0(A)$.

\subsection*{Acknowledgments}
Our research was partly supported by a grant from the National Science Foundation (DMS - 2201085). H.G. and S.M. were also supported through the Tow Mentoring and Research Program at Brooklyn College, City University of New York. We are grateful to the Tow Foundation for making this collaboration possible. 

We would like to thank Drew Sutherland for assisting us with the computation of the numerical moment statistics and the creation of the histogram in Section \ref{sec:momentstatistics}.

\section{Background}\label{sec:background}

\subsection{Background on degeneracy}\label{subsec:degeneracybackground}

In  this section we briefly recall some background information on Hodge rings and degeneracy (see \cite{GalleseGoodsonLombardo1,GalleseGoodsonLombardo2, GoodsonDegeneracy2024, Shioda82} for more details). For an abelian variety $A/\C$, we define its Hodge ring to be
$$\mathscr B^*(A):=\displaystyle\sum_{d=0}^{\dim(A)} \mathscr B^d(A), $$
where $\mathscr B^d(A)=(H^{2d}(A,\mathbb Q)\cap H^{d,d}(A))\otimes \mathbb C$ is the $\mathbb C$-span of Hodge classes of codimension $d$ on $A$. The Hodge classes of codimension $d=1$, i.e. $\mathscr B^1(A)$, are called the divisor classes of $A$. We say that an abelian variety $A$ is \emph{nondegenerate} if its Hodge ring is  generated by divisor classes and \emph{stably nondegenearate} if this is true for $A^n$ for all $n\in \Z_{\geq 1}$. Otherwise, we say that $A$ is \emph{degenerate}. The Hodge classes not generated by divisor classes are called \emph{exceptional}. In our work, we are particularly interested in \emph{indecomposable} Hodge classes, which are exceptional classes not generated by Hodge classes in any lower codimension. In other words, indecomposable classes in codimension $d$ are those that are not generated by $\mathscr B^1(A)+\mathscr B^2(A)+\cdots+ \mathscr B^{d-1}(A)$.

\subsection{Computation of the Sato-Tate group}\label{subsec:STgroupbackground}

The ease with which we can compute the Sato-Tate group of an abelian variety depends partly on the degeneracy of its Hodge ring. When the abelian variety is nondegenerate, we have a clear recipe for computing the Sato-Tate group as described below (see \cite{FGL2016,GoodsonCatalan, EmoryGoodson2022,EmoryGoodson2024, GoodsonHoque2024, LarioSomoza2018} for examples). 

Let $A$ be a dimension $g$ abelian variety defined over a number field $K$. The twisted Lefschetz group of $A$, denoted $\TL(A)$, is a closed algebraic subgroup of  $\Sp_{2g}$ defined by
\begin{align*}
\TL(A):=\bigcup_{\tau \in \Gal(\overline{K}/K)} \LL(A)(\tau),
\end{align*}
where $\LL(A)(\tau):=\{\gamma \in \Sp_{2g}\mid \gamma \alpha \gamma^{-1}=\tau(\alpha) \text{ for all }\alpha \in \End(A_{\overline{K}})\otimes \mathbb{Q}\}$. By Theorem 6.1 of \cite{Banaszak2015}, the algebraic Sato-Tate Conjecture holds for stably nondegenerate abelian varieties $A$ with $\AST(A)=\TL(A)$. We also note that, in this case, the Hodge group coincides with the Lefschetz group, which is the identity component of $\TL(A)$.

When $A$ is degenerate, the computation of the Sato-Tate group is more complicated. As we will see in Section \ref{sec:degeneracyresults}, exceptional classes in the Hodge ring lead to interesting relations among elements in the Hodge group, causing it to be strictly smaller than the Lefschetz group. Additionally, the component groups of the Sato-Tate group and twisted Lefschetz group may no longer coincide, as explained below.

Let $F$ be the endomorphism field of $A$, i.e. the field of definition of the endomorphisms of $A$. In the nondegenerate case, the canonical surjection
$$\ST(A)/\ST^0(A) \rightarrow \Gal(F/K),$$
is an isomorphism (see \cite[Remark 6.4]{Banaszak2015}). More generally, there is an isomorphism between $\ST(A)/\ST^0(A)$ and $\Gal(K(\varepsilon_A)/K)$, where $K(\varepsilon_A)$ is a finite extension of $K$ called the \emph{connected monodromy field} (see \cite[Section 1]{GalleseGoodsonLombardo2}). The endomorphism field will be contained in the connected monodromy field, and this containment may be strict. When the connected monodromy field is larger than the endomorphism field, one must use alternate methods to compute the component group of the Sato-Tate group since one needs a description of the generators of the extension $K(\varepsilon_A)$ over $K$  (see \cite[Section 2]{GalleseGoodsonLombardo2}). However when $K(\varepsilon_A)=F$, these more advanced methods are not necessary and it suffices to study  the twisted Lefschetz group.

\subsection{Preliminaries on the curve $C_{p^2}$ and its Jacobian}\label{subsec:CurveJacbackground}

In this section we collect several important facts about the curve $C_{p^2}$ and its Jacobian $\Jac(C_{p^2})$. 

The genus of $C_{p^2}$ is $g=\frac{p^2-1}{2}$. It is known that $C_{p^2}$ has CM by $\Q(\zeta_{p^2})$ and that its automorphism group is a cyclic group of order $4g+2$ (see, for example, \cite{Muller2017}). Let $\alpha$ denote the automorphism 
\begin{align}\label{eqn:curveautomorphism}
    \alpha(x,y):=(\zeta_{p^2}x,-y)
\end{align}
of the curve $C_{p^2}$. The order of $\alpha$ is $2p^2$, which equals $4g+2$. Thus, $\alpha$ is a generator of $\Aut(C_{p^2})$. 

Some key properties of the Jacobian $\Jac(C_{p^2})$ are collected in the following lemma.

\begin{lemma}\label{lemma:decomposition}
We have the following isogeny over $\Q$:    
$$\Jac(C_{p^2})\sim \Jac(C_p)\times X_{p^2},$$
where $C_p/\Q$ denotes the curve $y^2=x^p-1$ and $X_{p^2}$ is a simple abelian variety of dimension $\phi(p^2)/2$ with complex multiplication by $\Q(\zeta_{p^2})$.
\end{lemma}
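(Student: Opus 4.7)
The plan is to obtain the isogeny decomposition from a natural covering $C_{p^2}\to C_p$ and then identify the complementary factor using the cyclotomic action coming from $\Aut(C_{p^2})$. First I would exhibit the morphism $\pi\colon C_{p^2}\to C_p$ defined over $\Q$ by $(x,y)\mapsto(x^p,y)$: if $y^2=x^{p^2}-1$ then $y^2=(x^p)^p-1$, so $\pi$ is well-defined. Functoriality yields a surjection $\pi_*\colon\Jac(C_{p^2})\twoheadrightarrow\Jac(C_p)$ defined over $\Q$, and Poincar\'e's complete reducibility theorem, applied in the category of abelian varieties over $\Q$, produces an abelian subvariety $X_{p^2}\subseteq\Jac(C_{p^2})$ defined over $\Q$ such that $\Jac(C_{p^2})\sim_{\Q}\Jac(C_p)\times X_{p^2}$. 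A dimension count gives $\dim X_{p^2}=(p^2-1)/2-(p-1)/2=\phi(p^2)/2$.

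Next I would analyze the action on holomorphic differentials of the automorphism $\beta:=\alpha^2\colon(x,y)\mapsto(\zeta_{p^2}^{2}x,y)$, which has order $p^2$ since $p$ is odd. Using the standard basis $\omega_k=x^k\,dx/y$ for $0\le k\le(p^2-3)/2$, a direct calculation gives $\beta^{*}\omega_k=\zeta_{p^2}^{2(k+1)}\omega_k$, so the eigenvalues of $\beta$ on the cotangent space of $\Jac(C_{p^2})$ are exactly $\{\zeta_{p^2}^{j}:j\in\{2,4,\ldots,p^2-1\}\}$. Since $\beta$ satisfies $T^{p^2}-1=\Phi_1(T)\Phi_p(T)\Phi_{p^2}(T)$ and none of the cotangent eigenvalues equals $1$, $\beta$ is semisimple and splits $\Jac(C_{p^2})$ up to isogeny as the direct product of its $\Phi_p$- and $\Phi_{p^2}$-isotypic parts. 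Counting the eigenvalues with $p\mid j$ versus $p\nmid j$ shows these components have dimensions $(p-1)/2$ and $\phi(p^2)/2$, and they must coincide (up to isogeny) with the Poincar\'e decomposition above: the $\Phi_p$-part maps isogenously to $\Jac(C_p)$ under $\pi_*$, while the $\Phi_{p^2}$-part is killed by $\pi_*$ (since $\beta^p-1$ is an isogeny on it but is annihilated by $\pi_*$), so it is identified with $X_{p^2}$. Because the idempotents of $\Q[\beta]$ associated to this factorization have rational coefficients, they are Galois-invariant under $\beta\mapsto\beta^{\sigma}$ and the splitting descends to $\Q$. Restriction of $\beta$ to $X_{p^2}$ then yields an embedding $\Z[\zeta_{p^2}]\cong\Z[T]/\Phi_{p^2}(T)\hookrightarrow\End(X_{p^2})$, giving the desired CM by $\Q(\zeta_{p^2})$.

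The main obstacle will be the simplicity of $X_{p^2}$. Since $\dim X_{p^2}=\tfrac12[\Q(\zeta_{p^2}):\Q]$, the CM is of maximal type, and by standard CM theory $X_{p^2}$ is simple with $\End^{0}(X_{p^2})=\Q(\zeta_{p^2})$ precisely when the associated CM type $\Sigma\subset(\Z/p^2\Z)^{*}$---read off above as the set of even residues coprime to $p$---is primitive, i.e.\ has trivial right-stabilizer in $(\Z/p^2\Z)^{*}$. I would verify this by a finite combinatorial check: if $h\in(\Z/p^2\Z)^{*}$ satisfies $\Sigma h=\Sigma$, then taking $j=2\in\Sigma$ forces the integer representative of $h$ in $[1,p^2-1]$ to satisfy $h<p^2/2$, after which a parity test against $j_0:=\lceil p^2/h\rceil$ (or a nearby even residue coprime to $p$) produces an odd value of $\lfloor jh/p^2\rfloor$ and rules out every $h>1$. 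One may alternatively appeal to classical results on the decomposition of the Jacobians of curves $y^2=x^n-1$ into simple CM factors indexed by divisors of $n$. Primitivity of $\Sigma$ then yields simplicity of $X_{p^2}$ together with the identification $\End^{0}(X_{p^2})=\Q(\zeta_{p^2})$, completing the proof.
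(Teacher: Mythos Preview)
Your argument is essentially correct and is genuinely different from the paper's proof, which simply invokes \cite[Theorem 3.0.1]{GalleseGoodsonLombardo1} to obtain the decomposition $\Jac(C_{p^2})\sim X_p\times X_{p^2}$ together with the absolute simplicity and CM of each factor, and then identifies $X_p$ with $\Jac(C_p)$ via part~(2) of that same theorem. You instead construct everything by hand: the $\Q$-rational cover $(x,y)\mapsto(x^p,y)$ gives the splitting over $\Q$ via Poincar\'e reducibility, and the cyclotomic factorization of the action of $\beta=\alpha^2$ on $H^0(\Omega^1)$ both identifies the complementary factor with the $\Phi_{p^2}$-isotypic piece and supplies the embedding $\Z[\zeta_{p^2}]\hookrightarrow\End(X_{p^2})$. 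This is more self-contained and illuminates exactly where the CM structure comes from; the paper's proof, by contrast, is a two-line citation that gives no such insight but also carries no risk.

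The one place where your write-up is not yet a proof is the primitivity of the CM type. You correctly reduce to showing that no $h\neq 1$ in $(\Z/p^2\Z)^\times$ stabilizes the set of even residues, and the observation that $2h\in\Sigma$ forces $h<p^2/2$ is fine. But the sentence ``a parity test against $j_0:=\lceil p^2/h\rceil$ (or a nearby even residue coprime to $p$) produces an odd value of $\lfloor jh/p^2\rfloor$ and rules out every $h>1$'' is a plan, not an argument: you have not shown that such a nearby $j_0$ exists with $j_0h<2p^2$, and the required gap $p^2/h$ can be small when $h$ is close to $p^2/2$. This is fixable (and you flag it yourself), but as written it is a genuine lacuna. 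Your fallback of citing the classical decomposition of $\Jac(y^2=x^n-1)$ into simple CM factors indexed by $d\mid n$ is exactly what the paper does; if you take that route, you are in effect reproducing the paper's proof for the simplicity assertion while keeping your hands-on argument for the rest.
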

\begin{proof} 
From \cite[Theorem 3.0.1]{GalleseGoodsonLombardo1} we have that 
$$\Jac(C_{p^2})\sim \prod_{\substack{d \mid p^2,\\ d \neq 1,2}} X_d,$$
where $X_d$ is an abelian variety of dimension $\phi(d)/2$.  Thus,
$\Jac(C_{p^2})\sim X_p \times X_{p^2}$
(see also \cite[Section 4]{GoodsonDegeneracy2024}). Part (3) of \cite[Theorem 3.0.1]{GalleseGoodsonLombardo1} tells us that both $X_p$ and $X_{p^2}$ are absolutely simple abelian varieties with CM by $\Q(\zeta_p)$ and $\Q(\zeta_{p^2})$, respectively. Furthermore, part (2) of \cite[Theorem 3.0.1]{GalleseGoodsonLombardo1} tells us that $X_p$ is isogenous to the Jacobian variety $\Jac(C_p).$
\end{proof}

\subsubsection{Endomorphisms of the Jacobian}
First, we note that, by \cite[Prop 3.5.1]{GalleseGoodsonLombardo1}, the endomorphism field of $\Jac(C_{p^2})$ is the cyclotomic field $\Q(\zeta_{p^2})$, and part (3) of \cite[Theorem 7.2.12]{GalleseGoodsonLombardo1} tells us that this is also its connected monodromy field. Thus, the discussion in Section \ref{subsec:STgroupbackground} tells us that we can study the component group of Sato-Tate group of the Jacobian via the twisted Lefschetz group. The work in this section builds towards this by computing endomorphisms of the Jacobian obtained from the automorphisms of the curve $C_{p^2}$.

Throughout the paper, we use the usual basis for the $\Q$-vector space of regular differential 1-forms of the curve $C_{p^2}$:
\begin{align}\label{eqn:basisdifferentials}
    \left\{\omega_j:=\frac{x^{j-1}dx}{y}\right\}_{j=1}^{j=g},
\end{align}
where $g$ is the genus of the curve. The pullback of the regular 1-form $\omega_j$ with respect to the curve automorphism $\alpha$ in Equation \eqref{eqn:curveautomorphism} is 
\begin{equation}\label{eqn:pullbackdifferential}
    \alpha^{*}\omega_{j} = \frac{(\zeta_{p^2}x)^{j-1}d(\zeta_{p^2}x)}{-y} = - \zeta_{p^2}^{j} \omega_{j}.
\end{equation}

By taking the symplectic basis of $H_1(\Jac(C_{p^2})_\C,\C)$ corresponding to the basis $\{\omega_j\}_{j=1}^{g}$ of regular 1-forms (with respect to the skew-symmetric matrix $\diag(J,J,\ldots, J)$), we can express the endomorphism $\alpha$ corresponding to the automorphism in Equation \eqref{eqn:curveautomorphism} as
\begin{equation}\label{eqn:alphaendomorphism}
\alpha=\diag(Z,Z^2,Z^3,\ldots,Z^{g}),
\end{equation}
where $Z=-\diag(\zeta_{p^2},\overline\zeta_{p^2})$.

\section{Degeneracy}\label{sec:degeneracy}

It was proven in \cite[Theorem 1.1]{GoodsonDegeneracy2024} that the Jacobian of the curve $C_{p^2}$ is degenerate, and it was shown that there are exceptional classes in codimension $d=\frac{p+1}{2}$ in \cite[Lemma 3.3]{GoodsonDegeneracy2024} (in fact, the classes are indecomposable). The focus of those results was proving the degeneracy of the Jacobian varieties, and finding one codimension containing exceptional classes was sufficient for this. In our current work, we go a step further and prove that, for $\Jac(C_{p^2})$ this is the only codimension containing indecomposable classes. We also give the exact number of indecomposable classes and an explicit description of them.

\subsection{General degeneracy results}\label{sec:degeneracyresults}

Recall from Lemma \ref{lemma:decomposition} that $\Jac(C_{p^2})\sim \Jac(C_p)\times X_{p^2},$
where $C_p/\Q$ denotes the curve $y^2=x^p-1$ and $X_{p^2}$ is a simple abelian variety of dimension $\phi(p^2)/2$. We begin this section with results for the factors of $\Jac(C_{p^2})$.

\begin{lemma}\label{lemma:nondegeneratefactors}
    The abelian varieties $\Jac(C_p)$ and $X_{p^2}$ are both nondegenerate abelian varieties. 
\end{lemma}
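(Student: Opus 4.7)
The plan is to leverage Shioda's combinatorial description of the Hodge ring for Jacobians in the family $C_m$ (to be recalled in the following subsection), applied separately to each of the two simple CM factors identified by Lemma \ref{lemma:decomposition}. The guiding principle is that a simple CM abelian variety is nondegenerate precisely when its CM type satisfies a Shioda-style no-relations condition, and both $\Jac(C_p)$ and $X_{p^2}$ are absolutely simple with CM by the cyclotomic fields $\Q(\zeta_p)$ and $\Q(\zeta_{p^2})$ respectively.

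For $\Jac(C_p)$, the result should follow immediately from \cite[Theorem 1.1]{GoodsonDegeneracy2024}, which states that $\Jac(C_m)$ has a degenerate Hodge ring if and only if $m$ is an odd composite integer; since $p$ is prime, $\Jac(C_p)$ is nondegenerate.

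For $X_{p^2}$, the idea is to apply Shioda's construction to the character subsystem cut out by this factor, namely the eigenspaces of $\alpha^\ast$ on the differentials $\omega_j$ with $1 \leq j \leq g$ and $\gcd(j,p) = 1$ (see Equation \eqref{eqn:pullbackdifferential}). There are exactly $\phi(p^2)/2 = p(p-1)/2$ such indices, matching the dimension of $X_{p^2}$, and they correspond to primitive $p^2$-th roots of unity under the CM action. The task would then be to verify that every Shioda-admissible subset of this restricted character set yields only a divisor class, i.e., arises from a complex conjugate pair.

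The main obstacle is this combinatorial verification for $X_{p^2}$. What should make it tractable is the transitive action of $(\Z/p^2\Z)^\times$ on the primitive character set, which severely constrains the shape of any admissible sum-relation and reduces the check to a finite, explicit computation analogous to Shioda's original analysis of prime-power Fermat factors.
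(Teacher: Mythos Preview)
The paper's proof is a one-line citation to \cite[Theorem 7.2.3]{GalleseGoodsonLombardo1}, which asserts that every simple factor of $\Jac(C_{p^2})$ is nondegenerate. Your proposal takes a different, more self-contained route, but it is not a complete proof.

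For $\Jac(C_p)$ your argument is essentially fine, though note that \cite[Theorem 1.1]{GoodsonDegeneracy2024} as described in this paper only gives the implication ``$m$ odd composite $\Rightarrow$ degenerate''; you are invoking the converse. The nondegeneracy of $\Jac(C_p)$ for prime $p$ is indeed classical (it goes back to Shioda and to Ribet's work on CM types of prime conductor), so this is a minor citation issue rather than a mathematical gap.

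For $X_{p^2}$ there is a genuine gap: you correctly identify the strategy---restrict Shioda's index set to the $j$ with $\gcd(j,p)=1$ and show no admissible tuple survives except conjugate pairs---but you do not carry it out, and you yourself flag it as ``the main obstacle.'' This verification is precisely the nontrivial content: it amounts to showing that the CM type of $X_{p^2}$ on $\Q(\zeta_{p^2})$ is nondegenerate in the sense of Kubota--Ribet, which is exactly what \cite[Theorem 7.2.3]{GalleseGoodsonLombardo1} establishes. The transitive $(\Z/p^2\Z)^\times$-action you mention does constrain things, but it does not by itself rule out exceptional relations; one still needs an argument about the rank of the character lattice generated by the CM type. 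Note also that you cannot shortcut this by appealing to the later classification of indecomposable tuples in this paper (Theorem \ref{thm:indecomposabletuples}), since that result depends logically on the present lemma via Corollary \ref{cor:HodgeGroup}.

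In short: your outline is a reasonable alternative to the paper's citation, but as written it defers the actual work to an unproved combinatorial claim that is essentially equivalent to the cited external theorem.
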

\begin{proof}
    This is the result in Theorem 7.2.3 of \cite{GalleseGoodsonLombardo1}, which states that every simple factor of $\Jac(C_{p^2})$ is nondegenerate.
\end{proof}

\begin{lemma}\label{lemma:MTisomorphism}
    We have the following isomorphism of Mumford-Tate groups
    $$\MT(\Jac(C_{p^2})) \simeq \MT(X_{p^2}).$$
\end{lemma}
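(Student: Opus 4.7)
The plan is to show that the natural projection $\pi_2: \MT(\Jac(C_{p^2})) \to \MT(X_{p^2})$ is an isomorphism, using the CM structure of the isogeny factors together with the inclusion $\Q(\zeta_p) \subset \Q(\zeta_{p^2})$.

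Since the Mumford--Tate group is an isogeny invariant, Lemma~\ref{lemma:decomposition} gives an embedding
\[
\MT(\Jac(C_{p^2})) \;=\; \MT(\Jac(C_p) \times X_{p^2}) \;\hookrightarrow\; \MT(\Jac(C_p)) \times \MT(X_{p^2})
\]
in which both coordinate projections are surjective (a standard property of Mumford--Tate groups of products, obtained from the two projection morphisms of the abelian variety). Hence $\MT(\Jac(C_{p^2}))$ is the graph of some surjective homomorphism, and to prove the lemma it suffices to exhibit a surjection $\phi: \MT(X_{p^2}) \twoheadrightarrow \MT(\Jac(C_p))$ whose graph contains $\MT(\Jac(C_{p^2}))$; this will force $\pi_2$ to be an isomorphism.

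The candidate for $\phi$ comes from the CM structure. By Lemma~\ref{lemma:nondegeneratefactors}, both $\Jac(C_p)$ and $X_{p^2}$ are nondegenerate CM abelian varieties, so their Mumford--Tate groups are tori sitting inside the Weil restrictions $\mathrm{Res}_{\Q(\zeta_p)/\Q}\mathbb{G}_m$ and $\mathrm{Res}_{\Q(\zeta_{p^2})/\Q}\mathbb{G}_m$, each cut out by its CM type. The inclusion $\Q(\zeta_p) \subset \Q(\zeta_{p^2})$ induces a norm map between these Weil restrictions, which should restrict to a surjection $\phi$ of Mumford--Tate tori. The required compatibility between Hodge cocharacters is visible from Equation~\eqref{eqn:pullbackdifferential} and the discussion in Section~\ref{subsec:CurveJacbackground}: the automorphism $\alpha$ acts on the differentials $\omega_{pk}$ (which generate the $\Jac(C_p)$-summand, for $k = 1, \dots, (p-1)/2$) with eigenvalue $-\zeta_p^k$, while it acts on the $X_{p^2}$-summand with eigenvalues $-\zeta_{p^2}^j$ for $j$ coprime to $p$. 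A short computation with Galois conjugates over $\Q(\zeta_p)$ establishes the identity $\Norm_{\Q(\zeta_{p^2})/\Q(\zeta_p)}(-\zeta_{p^2}^k) = -\zeta_p^k$ for odd $p$, so the two Hodge cocharacters correspond under $\phi$, placing $\MT(\Jac(C_{p^2}))$ inside the graph of $\phi$.

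The main obstacle will be carrying out the torus/character bookkeeping rigorously, i.e.\ precisely matching the CM types of $\Jac(C_p)$ and $X_{p^2}$ under the norm map and confirming that $\MT(\Jac(C_{p^2}))$ realizes the entire graph of $\phi$ rather than a proper subtorus. This reduces to calculations on the character lattices of the two Weil restrictions of $\mathbb{G}_m$, combined with the elementary fact that the graph of a morphism of $\Q$-tori is a closed $\Q$-subgroup, which together with the surjectivity of $\pi_1$ and $\pi_2$ forces equality.
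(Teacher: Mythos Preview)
The paper's proof is a single citation: Proposition~7.2.4 of \cite{GalleseGoodsonLombardo1} asserts directly that the Mumford--Tate group of $\Jac(C_{p^2})$ agrees with that of its largest simple factor, which is $X_{p^2}$ by Lemma~\ref{lemma:decomposition}. Nothing further is argued.

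Your route is genuinely different: you try to reconstruct this isomorphism from scratch via the CM structure and the norm map $\mathrm{Res}_{\Q(\zeta_{p^2})/\Q}\mathbb{G}_m \to \mathrm{Res}_{\Q(\zeta_p)/\Q}\mathbb{G}_m$. The overall architecture is sound, and your sufficient condition (exhibit a surjection $\phi\colon \MT(X_{p^2}) \twoheadrightarrow \MT(\Jac(C_p))$ whose graph contains $\MT(\Jac(C_{p^2}))$) is correct. The norm identity $\Norm_{\Q(\zeta_{p^2})/\Q(\zeta_p)}(-\zeta_{p^2}^k)=-\zeta_p^k$ is also correct for odd $p$.

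That said, there are two gaps. First, the sentence ``Hence $\MT(\Jac(C_{p^2}))$ is the graph of some surjective homomorphism'' does not follow from the surjectivity of both projections; a subtorus of $T_1\times T_2$ surjecting onto both factors need not be a graph (this is exactly what you are trying to prove). You recover in the next sentence, so this is an expository slip rather than a fatal error. Second, and more substantively, your proposal is by your own admission a plan rather than a proof: the ``torus/character bookkeeping'' you flag as the main obstacle is where the content lives. You have not verified that the norm map restricts to a surjection between the two Mumford--Tate tori, nor that the Hodge cocharacter of the product lands in the graph of $\phi$. The passage invoking the eigenvalues $-\zeta_{p^2}^j$ of $\alpha$ on $\omega_j$ conflates the CM action with the Hodge cocharacter; the link between them runs through the CM type and the reflex norm, and that translation is not written out. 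Until those steps are completed, what you have is a plausible outline, not a proof.
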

\begin{proof}
    This follows from Proposition 7.2.4 of \cite{GalleseGoodsonLombardo1} since $X_{p^2}$ is the largest simple factor of $\Jac(C_{p^2})$ by Lemma \ref{lemma:decomposition}.
\end{proof}

The Mumford-Tate and Hodge groups are closely related to each other: the Hodge group of $A$ is the connected component of the identity of $\MT(A)\cap \SL_{V}$ (see \cite[Definition 4.2]{Banaszak2015} or \cite[Section 2.2]{GoodsonDegeneracy2024}). This immediately gives us the following corollary of Lemma \ref{lemma:MTisomorphism}.

\begin{corollary}
We have the following isomorphism of Hodge groups:
    $$\Hg(\Jac(C_{p^2})) \simeq \Hg(X_{p^2}).$$
\end{corollary}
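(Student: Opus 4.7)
The plan is to deduce this corollary as an immediate consequence of Lemma \ref{lemma:MTisomorphism}, combined with the functorial description of the Hodge group recalled just above the statement: for any abelian variety $A$, the Hodge group $\Hg(A)$ is the connected component of the identity of $\MT(A)\cap \SL_V$, where $V=H_1(A,\Q)$.

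First I would invoke Lemma \ref{lemma:MTisomorphism} to obtain an isomorphism of algebraic groups $\MT(\Jac(C_{p^2})) \simeq \MT(X_{p^2})$ over $\Q$. The key point I would then verify is that this isomorphism is compatible with intersecting against $\SL_V$. This compatibility is essentially built into the construction of the Mumford–Tate isomorphism in \cite[Prop. 7.2.4]{GalleseGoodsonLombardo1}: the map is induced by the projection of $\Jac(C_{p^2})$ onto its largest simple factor $X_{p^2}$, which is a morphism of polarized rational Hodge structures. Consequently it intertwines the similitude characters of the two ambient symplectic groups, and so restricts to an isomorphism on the kernels of those characters, i.e. on the intersections with the corresponding $\SL_V$'s.

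Finally, since taking the connected component of the identity is a functorial operation on algebraic groups, applying it to both sides of this restricted isomorphism yields the desired isomorphism $\Hg(\Jac(C_{p^2})) \simeq \Hg(X_{p^2})$.

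I do not expect any serious obstacle here; the corollary is really a formal consequence of Lemma \ref{lemma:MTisomorphism} and the definition of $\Hg$. The only point requiring any care is the compatibility of the Mumford–Tate isomorphism with the similitude characters, but this is immediate from the geometric origin of the isomorphism as a projection onto the largest simple CM factor.
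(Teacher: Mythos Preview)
Your proposal is correct and follows essentially the same approach as the paper: the corollary is stated as an immediate consequence of Lemma \ref{lemma:MTisomorphism} together with the description of $\Hg(A)$ as the identity component of $\MT(A)\cap\SL_V$. If anything, you are more careful than the paper in checking that the Mumford--Tate isomorphism respects the similitude characters, a point the paper leaves implicit.
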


The Hodge group of an abelian variety $A/F$, where $F$ is a number field, is also closely related to the Lefschetz group, denoted $\LL(A)$, which is the connected component of the identity in the centralizer of the endomorphism ring $\End(A_{\overline F})_\Q$ in $\Sp_{V}$. In general, the Hodge group of $A$ is a subgroup of the Lefschetz group. As another corollary to Lemma \ref{lemma:MTisomorphism} we have the following result.

\begin{corollary}
    $\Hg(X_{p^2})= \LL(X_{p^2}).$
\end{corollary}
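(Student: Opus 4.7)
The plan is to exploit the nondegeneracy of $X_{p^2}$ established in Lemma \ref{lemma:nondegeneratefactors}, together with the standard dictionary between Hodge-theoretic degeneracy and the comparison of algebraic groups. For any abelian variety $A$ one always has the inclusion $\Hg(A)\subseteq \LL(A)$ inside $\Sp_V$, so the content of the corollary is the reverse containment for $X_{p^2}$.

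First I would recall that $\LL(X_{p^2})$ is defined as the connected component of the centralizer of $\End(X_{p^2,\overline{K}})_\Q$ in $\Sp_V$, so that the $\LL(X_{p^2})$-invariants in $H^*(X_{p^2},\Q)$ are precisely the subring generated by the divisor classes $\mathscr{B}^1(X_{p^2})$. In contrast, the $\Hg(X_{p^2})$-invariants in $H^*(X_{p^2},\Q)$ are exactly the full Hodge ring $\mathscr{B}^*(X_{p^2})$ (this is the defining property of the Hodge group). Therefore the inclusion $\Hg(X_{p^2})\subseteq \LL(X_{p^2})$ is an equality if and only if the Hodge ring is generated by the divisor classes, that is, if and only if $X_{p^2}$ is nondegenerate.

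Next I would simply invoke Lemma \ref{lemma:nondegeneratefactors}, which gives exactly this nondegeneracy of $X_{p^2}$, together with the fact cited in the text that $\Hg(X_{p^2})$ may be identified with the identity component of $\MT(X_{p^2})\cap \SL_V$; combined with Lemma \ref{lemma:MTisomorphism} this keeps the whole argument inside the setting the paper has already set up. The conclusion $\Hg(X_{p^2})=\LL(X_{p^2})$ then follows from the invariant-theoretic equivalence above.

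The main obstacle is really just a matter of bookkeeping rather than mathematics: one must make sure that the version of the nondegeneracy/equality equivalence used here is the one proved in the references (e.g.\ Banaszak--Kedlaya or \cite{GalleseGoodsonLombardo1}), and that the Lefschetz and Hodge groups of $X_{p^2}$ are being computed with respect to the same symplectic representation so that the comparison is legitimate. Once those references are lined up, the corollary is essentially a one-line consequence of Lemmas \ref{lemma:nondegeneratefactors} and \ref{lemma:MTisomorphism}.
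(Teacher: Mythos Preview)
Your approach is essentially the same as the paper's: both deduce the equality from the nondegeneracy of $X_{p^2}$ (Lemma \ref{lemma:nondegeneratefactors}), and the invariant-theoretic equivalence you sketch is precisely the content of \cite[Lemma 3.5]{FGL2016}, which the paper simply cites. One small point: your appeal to Lemma \ref{lemma:MTisomorphism} is unnecessary here, since the corollary concerns only $X_{p^2}$ and not the comparison with the full Jacobian.
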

\begin{proof}
    The proof of \cite[Lemma 3.5]{FGL2016} demonstrates that if an abelian variety is nondegenerate then its Hodge group is isomorphic to its Lefschetz group.
\end{proof}

We now give an explicit description of the Lefschetz group of $X_{p^2}$.
\begin{lemma}\label{lem:Lefschetzfactor}
Let $g'=\phi(p^2)/2$. Then
$$\LL(X_{p^2}) \otimes_{\mathbb{Q}} \mathbb{Q}_{\ell}=\{\diag( x_1,y_1,\dots,x_{g'},y_{g'}) \in \mathbb{Q}_{\ell}^{\times}~|~x_1y_1=\cdots=x_{g'}y_{g'}=1\}.$$
\end{lemma}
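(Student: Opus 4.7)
The plan is to exploit the CM structure of $X_{p^2}$ to diagonalize the action of its endomorphism algebra on $\ell$-adic cohomology, and then intersect the resulting diagonal torus with the symplectic group. By Lemma \ref{lemma:decomposition}, $X_{p^2}$ is a simple abelian variety of dimension $g'=\phi(p^2)/2$ with CM by $K:=\Q(\zeta_{p^2})$, so $\End(X_{p^2})_{\overline\Q}\otimes \Q \cong K$ and $V:=H_1(X_{p^2},\Q)$ is free of rank one over $K$ (since $\dim_\Q V = 2g' = [K:\Q]$). Extending scalars to $\Q_\ell$, the algebra $K\otimes_\Q \Q_\ell$ splits (over a suitable finite extension, and ultimately over $\overline\Q_\ell$) as a product of $2g'$ copies of the base field indexed by the embeddings $\chi : K \hookrightarrow \overline\Q_\ell$, and $V\otimes\Q_\ell$ decomposes accordingly as a direct sum of $2g'$ one-dimensional eigenlines $V_\chi$.

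In the eigenline basis, the centralizer of $K$ inside $\GL(V)\otimes \Q_\ell$ is precisely the diagonal torus on $2g'$ generators. To cut this down to $\Sp_{2g'}$, I would use the compatibility of the symplectic form with the Rosati involution: the Rosati involution attached to the principal polarization on the CM abelian variety $X_{p^2}$ coincides with complex conjugation on $K$, so the symplectic pairing satisfies $\langle \zeta v, w\rangle = \langle v, \overline\zeta w\rangle$ for all $\zeta\in K$. This forces $\langle V_\chi, V_{\chi'}\rangle = 0$ unless $\chi' = \overline\chi$, pairing the $2g'$ embeddings into $g'$ conjugate pairs and decomposing $V\otimes\Q_\ell$ into $g'$ hyperbolic planes.

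I would then order the eigenline basis so that conjugate characters appear consecutively, matching the symplectic basis set up in Section \ref{subsec:CurveJacbackground} in which the form is $\diag(J,J,\ldots,J)$. In this basis a $K$-centralizing element has the shape $\diag(x_1,y_1,\ldots,x_{g'},y_{g'})$, and a direct computation with the $2\times 2$ block $\diag(x_i,y_i)$ preserving $J$ yields $x_iy_i=1$ for each $i$, recovering the subgroup in the statement. Finally this subgroup is a connected torus, so it already equals its identity component and hence equals $\LL(X_{p^2})\otimes_\Q\Q_\ell$. The main subtlety is purely bookkeeping: arranging the eigenline decomposition so that conjugate pairs line up with the fixed symplectic $J$-blocks rather than some permutation of them; once the conventions are matched, the description of the torus is essentially automatic from the rank-one $K$-module structure of $V$.
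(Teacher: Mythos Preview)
Your argument is correct and is essentially the standard CM diagonalization that underlies the references the paper cites: the paper's own proof simply invokes Lemma~\ref{lemma:nondegeneratefactors} together with \cite[Lemma~3.5]{FGL2016} and \cite[Proposition~3.3]{EmoryGoodson2022}, whereas you spell out the content of those results---decompose $V\otimes\Q_\ell$ into character eigenlines for $K=\Q(\zeta_{p^2})$, use the Rosati-compatibility of the polarization to pair conjugate characters into hyperbolic planes, and read off the centralizer inside $\Sp$. One small remark: the Lefschetz group is by definition the identity component of the centralizer of $\End(X_{p^2,\overline\Q})_\Q$ in $\Sp_V$, so nondegeneracy is not actually needed for this computation (it becomes relevant only in the next corollary, where one identifies $\Hg(X_{p^2})$ with $\LL(X_{p^2})$); your omission of it is therefore appropriate. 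Also, the symplectic basis of Section~\ref{subsec:CurveJacbackground} is set up for $\Jac(C_{p^2})$ rather than for the factor $X_{p^2}$, so strictly speaking you are choosing an analogous basis for $X_{p^2}$ rather than restricting the one already fixed---but this is exactly the bookkeeping subtlety you flag at the end.
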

\begin{proof}
    This follows from the nondegeneracy result of Lemma \ref{lemma:nondegeneratefactors} and the same proof techniques as \cite[Lemma 3.5]{FGL2016} and \cite[Proposition 3.3]{EmoryGoodson2022}.
\end{proof}

\begin{corollary}\label{cor:HodgeGroup}
    $\Hg(\Jac(C_{p^2}))\simeq \U(1)^{g'}$, where $g'=\phi(p^2)/2$.
\end{corollary}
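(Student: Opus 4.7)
The plan is to chain together the three results immediately preceding the statement. By the corollary to Lemma \ref{lemma:MTisomorphism} we have $\Hg(\Jac(C_{p^2})) \simeq \Hg(X_{p^2})$, and by the second corollary (together with Lemma \ref{lemma:nondegeneratefactors}, which ensures $X_{p^2}$ is nondegenerate) we have $\Hg(X_{p^2}) = \LL(X_{p^2})$. Concatenating,
$$\Hg(\Jac(C_{p^2})) \simeq \LL(X_{p^2}).$$
Thus the problem reduces to identifying $\LL(X_{p^2})$ with the stated group $\U(1)^{g'}$.

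Next I would invoke Lemma \ref{lem:Lefschetzfactor}, which describes $\LL(X_{p^2})\otimes_\Q\Q_\ell$ as the algebraic torus of diagonal matrices $\diag(x_1,y_1,\ldots,x_{g'},y_{g'})$ with $x_i y_i = 1$ for each $i$. This is a product of $g'$ copies of $\mathbb{G}_m$. Since the Hodge group is by construction a subgroup of the compact real symplectic group $\USp(2g')$, the relevant object is the compact real form of this algebraic torus. On each coordinate pair, the symplectic and unitary constraints force $y_i = \overline{x_i}$ with $|x_i| = 1$, so the compact form of each $\mathbb{G}_m$ factor is exactly a copy of $\U(1)$, embedded as $u_i \mapsto \diag(u_i,\overline{u_i})$—which is precisely the embedding $\U(1) \hookrightarrow \SU(2)$ fixed in the Notation section.

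Assembling the $g'$ blocks gives an element of the form $\diag(U_1,U_2,\ldots,U_{g'})$ with each $U_i \in \U(1)$, which is by definition an element of the group $\U(1)^{g'} \subset \USp(2g')$ introduced in the Notation section. Hence $\LL(X_{p^2}) \simeq \U(1)^{g'}$, and combining with the reduction above yields the claimed isomorphism $\Hg(\Jac(C_{p^2})) \simeq \U(1)^{g'}$.

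The only mildly delicate point, and the one I would be careful to articulate, is the passage from the $\Q_\ell$-description in Lemma \ref{lem:Lefschetzfactor} to the compact real form $\U(1)^{g'}$; but this is routine for a CM torus, since the Lefschetz group of a CM abelian variety is already commutative and its compact form is determined coordinate-by-coordinate by the symplectic pairing, exactly as encoded in the embedding $u \mapsto \diag(u,\overline u)$.
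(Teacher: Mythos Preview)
Your proposal is correct and follows essentially the same chain of isomorphisms as the paper's proof, which simply writes $\Hg(\Jac(C_{p^2}))\simeq \Hg(X_{p^2})\simeq \LL(X_{p^2})\simeq \U(1)^{g'}$ citing ``the above results.'' You have actually supplied more detail than the paper does on the passage from the $\Q_\ell$-description of $\LL(X_{p^2})$ in Lemma~\ref{lem:Lefschetzfactor} to the compact form $\U(1)^{g'}$, a step the paper leaves entirely implicit.
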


\begin{proof}
    The above results tell us that $\Hg(\Jac(C_{p^2}))\simeq \Hg(X_{p^2})\simeq  \LL(X_{p^2})\simeq \U(1)^{g'}.$
\end{proof}

Since $\Jac(C_{p^2})$ is of CM-type, it follows from Theorem 17.3.5 of \cite{BirkenhakeLange2004} that the Hodge group is commutative. The result of Corollary \ref{cor:HodgeGroup} is more specific: it tells us that $\Hg(\Jac(C_{p^2}))$ is \emph{smaller than expected} in the sense of \cite{GoodsonDegeneracy2024}. If $\Jac(C_{p^2})$ was nondegenerate, then the Hodge group would be isomorphic to $\U(1)^g$, where $g=({p^2-1})/{2}$ is the genus of the curve $C_{p^2}$. However, Corollary \ref{cor:HodgeGroup} tells us that $\Hg(\Jac(C_{p^2}))$ is isomorphic to $\U(1)^{g'}$. In other words, if we identify an element of the Hodge ring of $\Jac(C_{p^2})$ with a $2g\times 2g$ matrix $U\in \U(1)^g$, then there must be dependencies among some entries of the matrix. More precisely, since 
$$g-g'=\frac{p^2-1}{2}-\frac{p(p-1)}{2}=\frac{p-1}{2},$$
it must be the case that $p-1$ entries in such a matrix $U$ are dependent on the other entries of $U$.

In order to describe the dependencies, we will use a result  from \cite{BirkenhakeLange2004} that relates the $\Q$-span of Hodge classes (in the Hodge ring) with the Hodge group. Note that, in the notation of Shioda, $\mathscr B^d(A)=H^{2d}_\text{Hodge}(A)\otimes\C$ (see \cite[Section 2]{GoodsonDegeneracy2024} for further details on the notation). 

\begin{theorem}\cite[Theorem 17.3.3]{BirkenhakeLange2004}\label{thm:Hodge}
Let $A$ be an abelian variety of dimension $g$. For any $1\leq d \leq g$, denote by 
$$H^{2d}_\text{Hodge}(A):=H^{2d}(A,\Q)\cap H^{d,d}(A)$$
the $\Q$-vector space of Hodge cycles of codimension $d$ on $A$. Then
$$H^{2d}_\text{Hodge}(A)= H^{2d}(A,\Q)^{\Hg(A)}.$$
\end{theorem}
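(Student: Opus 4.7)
The plan is to translate the analytic condition of being a Hodge class into a group-theoretic invariance condition, exploiting the very definition of the Hodge group $\Hg(A)$ as the smallest $\Q$-algebraic subgroup of $\GL(H^1(A,\Q))$ whose real points contain the image of the Hodge circle. Recall that the Hodge decomposition $H^1(A,\C) = H^{1,0}\oplus H^{0,1}$ is encoded by a morphism $h\colon \U(1) \to \GL(H^1(A,\R))$ of real algebraic groups, under which $u \in \U(1)$ acts on $H^{p,q}$ by $u^{p-q}$. This action extends naturally to $H^{2d}(A,\C) \cong \bigwedge^{2d} H^1(A,\C)$, where on a pure type-$(p,q)$ class with $p+q = 2d$ the element $u$ acts by $u^{p-q}$. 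Consequently, an element of $H^{2d}(A,\C)$ lies in $H^{d,d}(A)$ if and only if it is fixed pointwise by $h(\U(1))$.

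First I would use this translation to establish the inclusion $H^{2d}(A,\Q)^{\Hg(A)} \subseteq H^{2d}_{\text{Hodge}}(A)$: if $\alpha \in H^{2d}(A,\Q)$ is fixed by all of $\Hg(A)$, then in particular its image in $H^{2d}(A,\R)$ is fixed by $h(\U(1)) \subseteq \Hg(A)(\R)$, and by the observation above this forces $\alpha$ to lie in $H^{d,d}(A)$, hence to be a Hodge class.

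Next I would prove the reverse inclusion via a standard Tannakian-style argument. Suppose $\alpha \in H^{2d}(A,\Q)$ is a Hodge class, and consider its stabilizer
\[
    G_\alpha := \{g \in \GL(H^1(A,\Q)) \mid g\cdot \alpha = \alpha\},
\]
where $g$ acts on $H^{2d}(A,\Q) \cong \bigwedge^{2d} H^1(A,\Q)$ via the $2d$-th exterior power of its action on $H^1$. Since $\alpha$ is rational, the condition $g\cdot \alpha = \alpha$ is polynomial with $\Q$-coefficients in the matrix entries of $g$, so $G_\alpha$ is a $\Q$-algebraic subgroup of $\GL(H^1(A,\Q))$. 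The preceding identification, combined with the hypothesis that $\alpha$ is Hodge, gives $h(\U(1)) \subseteq G_\alpha(\R)$. The minimality clause in the definition of $\Hg(A)$ then forces $\Hg(A) \subseteq G_\alpha$, so $\alpha$ is $\Hg(A)$-invariant, as desired.

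The main subtlety lies in setting up the third step cleanly: one must confirm that the induced action of $\GL(H^1(A,\Q))$ on $\bigwedge^{2d} H^1(A,\Q)$ is a morphism of $\Q$-algebraic groups (clear from the functoriality of the exterior power) and that the pointwise stabilizer of a rational vector is a $\Q$-algebraic subgroup. Once these formal checks are in place, the theorem is a direct consequence of the minimality in the definition of $\Hg(A)$, with no further computation needed.
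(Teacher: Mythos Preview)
The paper does not supply its own proof of this statement: it is quoted verbatim as \cite[Theorem 17.3.3]{BirkenhakeLange2004} and used as a black box. So there is no in-paper argument to compare against.

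That said, your proposed argument is the standard one and is correct. The two inclusions are exactly as you describe: invariance under $\Hg(A)$ implies invariance under $h(\U(1))\subseteq \Hg(A)(\R)$, which forces type $(d,d)$; and conversely the pointwise stabilizer in $\GL(H^1(A,\Q))$ of a rational Hodge class is a $\Q$-defined algebraic subgroup whose real points contain $h(\U(1))$, so it contains $\Hg(A)$ by minimality. The formal checks you flag (that the exterior-power representation is $\Q$-algebraic and that a point stabilizer of a rational vector is $\Q$-defined) are routine, and your sketch handles them adequately. This is essentially the argument given in Birkenhake--Lange as well.
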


This result tells us that Hodge classes $\mathscr B^d(A)$ are invariant under the Hodge group. In our work we will use the indecomposable Hodge classes (which we will provide a description for in Corollary \ref{cor:indecomposableHodgeclasses}) to identify the extra relations in the Hodge group. See \cite[Chapter 17.3]{BirkenhakeLange2004} for a complete description of the action and \cite[Section 6]{GoodsonDegeneracy2024} for a worked example. In the next section we present a construction of Shioda \cite{Shioda82} that can be used to better understand  indecomposable Hodge classes.

\subsection{Shioda's construction}

In \cite[Section 1]{Shioda82}, Shioda defines a set of tuples that can act as an index set for Hodge classes of codimension $d$ for the Jacobian variety $\Jac(y^2=x^m-1)$, where $m\geq 3$ is an odd integer. We introduce the notation that is needed for our results, though we present most of it for general (odd) $m$ and not just $m=p^2$. Note that our notation differs slightly from that in \cite{Shioda82}.

\begin{definition}\label{def:theBset}
    Let $m$ be a positive, odd integer and $d$ be an integer satisfying $1\leq d \leq \frac{m-1}{2}$. 
    We define the set
    \begin{align}
        \mathfrak B_m^d:=\{\beta=(b_1,b_2,\ldots, b_{2d})\}
    \end{align}
    to be the set of tuples of length $2d$ satisfying the following properties:
    \begin{enumerate}
        \item $1\leq b_1<b_2<\cdots <b_{2d}\leq m-1$;\label{def:Bset_increasingentries}
        \item $\sum_{i=1}^{2d} b_i\equiv 0\pmod m $;\label{def:Bset_sumofentries} 
        \item $|t\cdot \beta|=d$ for all $t\in (\mathbb Z/m\mathbb Z)^\times$, where $|t\cdot\beta|=\sum_{i=1}^{2d} \langle tb_i\rangle_m/m$.\label{def:Bset_tbeta}
    \end{enumerate}
\end{definition}

The next two lemmas make the statements in Definition \ref{def:theBset} more precise. These lemmas also allow for more efficient computation as they significantly reduce the number of possible tuple combinations one needs to generate (see Remark \ref{rem:ShiodaTupleComputation}).

\begin{lemma}\label{lemma:betasumdm}
    Let $\beta=(b_1,\ldots,b_{2d})\in \mathfrak B_m^d$. Then $\sum_{i=1}^{2d} b_i=dm.$
\end{lemma}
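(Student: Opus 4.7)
The plan is to deduce the exact value of the sum directly from condition (\ref{def:Bset_tbeta}) of Definition \ref{def:theBset}, specialized at a well-chosen element of $(\mathbb{Z}/m\mathbb{Z})^\times$. Condition (\ref{def:Bset_sumofentries}) together with condition (\ref{def:Bset_increasingentries}) only gives the bounds $m \leq \sum_{i} b_i \leq (2d-1)m$ and that this sum is a multiple of $m$, so it alone cannot pin down the value. The extra information we need is packaged in the norm condition $|t\cdot\beta|=d$.

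First I would take $t=1$ in condition (\ref{def:Bset_tbeta}), which yields
\begin{equation*}
\sum_{i=1}^{2d} \frac{\langle b_i\rangle_m}{m} = d.
\end{equation*}
Next I would invoke condition (\ref{def:Bset_increasingentries}): since $1 \leq b_i \leq m-1$ for every $i$, each $b_i$ already lies in the canonical range $\{0,1,\ldots,m-1\}$, so $\langle b_i\rangle_m = b_i$. Substituting this identity into the displayed equation and clearing the denominator $m$ produces $\sum_{i=1}^{2d} b_i = dm$, which is the desired conclusion.

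There is essentially no obstacle here; the lemma is a direct unpacking of the definitions, and the only subtlety to flag is that condition (\ref{def:Bset_tbeta}) is nontrivial precisely because $\langle tb_i\rangle_m$ need not equal $tb_i$ for general $t$, whereas for $t=1$ it collapses to the identity. This is also why this lemma sharpens condition (\ref{def:Bset_sumofentries}) from a mere congruence to an exact equality: the norm condition at $t=1$ simultaneously constrains the residue class and the magnitude of the sum.
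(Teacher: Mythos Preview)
Your proof is correct and follows essentially the same approach as the paper: specialize condition~(\ref{def:Bset_tbeta}) to $t=1$, observe that $\langle b_i\rangle_m=b_i$ because $1\le b_i\le m-1$, and clear the denominator.
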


\begin{proof}
    Let $\beta$ be as defined in the statement of the lemma. Property \ref{def:Bset_tbeta} of Definition \ref{def:theBset} tells us that $|t\cdot \beta|=d$ for all $t\in \mathbb Z/m\mathbb Z^*$. In particular, this will be satisfied for $t=1$, and so
       $$d=|1\cdot\beta|=|\beta|
         =\sum_{i=1}^{2d} \frac{\langle b_i\rangle_m}{m}
         =\sum_{i=1}^{2d} \frac{b_i}{m},$$
    where the last equality holds since $0<b_i<m$ for all $i$. Thus, $\sum_{i=1}^{2d} {b_i}=dm.$
\end{proof}

The next result gives us another important property for the entries in the tuples.

\begin{lemma}\label{lemma:betaentrybounds}
    Every $\beta=(b_1,\ldots,b_{2d})\in\mathfrak B_m^d$ satisfies $b_{d}< \frac{m}{2}$ and $b_{d+1}> \frac{m}{2}.$
\end{lemma}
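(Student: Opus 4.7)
The plan is to exploit property (\ref{def:Bset_tbeta}) of Definition \ref{def:theBset} for a well-chosen value of $t$. The case $t=1$ already produced Lemma \ref{lemma:betasumdm}, so the natural next choice is the smallest remaining unit. Since $m$ is odd, $2 \in (\Z/m\Z)^\times$, and I would apply the invariance condition with $t=2$.

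The key observation is that for any $b_i \in \{1,2,\ldots,m-1\}$, the product $2b_i$ either lies in $(0,m)$ or in $(m,2m)$, according to whether $b_i < m/2$ or $b_i > m/2$; the borderline case $b_i = m/2$ is excluded because $m$ is odd. Consequently $\langle 2b_i\rangle_m$ equals $2b_i$ in the first case and $2b_i - m$ in the second. Letting $k$ denote the number of indices $i$ with $b_i < m/2$, one can write $|2\cdot\beta|$ as a closed-form expression involving $\sum_i b_i$, $k$, and $m$.

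Substituting $\sum_i b_i = dm$ from Lemma \ref{lemma:betasumdm} collapses this expression to $|2\cdot\beta| = k$. But property (\ref{def:Bset_tbeta}) forces $|2\cdot\beta| = d$, so exactly $d$ of the $b_i$ lie below $m/2$ and exactly $d$ lie above it. Since by property (\ref{def:Bset_increasingentries}) the $b_i$ are strictly increasing, this is equivalent to the desired inequalities $b_d < m/2$ and $b_{d+1} > m/2$.

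There is no real obstacle here: the only insight required is to invoke the $t=2$ instance of property (\ref{def:Bset_tbeta}) in combination with Lemma \ref{lemma:betasumdm}, after which the conclusion is a one-line counting argument.
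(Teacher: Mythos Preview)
Your proposal is correct and uses the same key idea as the paper---applying property~(\ref{def:Bset_tbeta}) with $t=2$ together with Lemma~\ref{lemma:betasumdm}. Your direct counting argument (computing $|2\cdot\beta|=k$, where $k$ is the number of $b_i<m/2$) is in fact slightly cleaner than the paper's version, which reaches the same conclusion via a contradiction and inequality bound rather than an exact computation.
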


\begin{proof}
    First note that, since $m$ is odd, it is relatively prime to 2. Thus, property \ref{def:Bset_tbeta} of Definition \ref{def:theBset} tells us that $|2\cdot \beta|=d$. 
    
    Suppose $b_{d+1}< \frac{m}{2}.$ Then 
    \begin{equation}\label{eqn:generalsumalpha2t}
        \sum_{i=1}^{2d} \langle 2b_i\rangle_m = \left(\sum_{i=1}^{d+1} 2b_i\right) + \left(\sum_{i=d+2}^{2d} \langle 2b_i \rangle_m \right).
    \end{equation} 
    Each $\langle 2b_i \rangle_m$, for $i> d$, equals $2b_i$ or $2b_i-m$. Thus,  
    $$\sum_{i=d+2}^{2d} \langle 2b_i \rangle\geq \sum_{i=d+2}^{2d} (2b_i-m)= \left(\sum_{i=d+2}^{2d} 2b_i\right) -(d-1)m.$$
    Combining this with Equation \eqref{eqn:generalsumalpha2t} yields
    \begin{align*}\sum_{i=1}^{2d} \langle 2b_i\rangle &\geq \left(\sum_{i=1}^{d+1} 2b_i\right)+\left(\sum_{i=d+2}^{2d} 2b_i\right) -(d-1)m\\
    &=\left(\sum_{i=1}^{2d} 2b_i\right)-(d-1)m\\
    &=2(dm)-(d-1)m\\
    &=m(d+1).
    \end{align*}
    But this would imply that $|2\cdot \beta|=d+1$, which is too large. Thus, we must have $b_{d+1}\geq \frac{m}{2},$ and so $b_{d+1}> \frac{m}{2}$ since $m$ is odd.

    Supposing $b_{d}> \frac{m}{2}$ and using similar techniques as above, one can show that we again get a contradiction. Thus, we must have $b_{d}< \frac{m}{2}$.
\end{proof}

\begin{remark}\label{rem:ShiodaTupleComputation}
To generate the tuple combinations for any given value of $m$ and $d$, we developed programs in both Python and Java. In the Python implementation, tuples were initially generated using the \texttt{combinations} function from the \texttt{itertools} module. These tuples were then further checked to ensure that all properties outlined in Definition \ref{def:theBset} were met. Incorporating the results from Lemmas \ref{lemma:betasumdm} and \ref{lemma:betaentrybounds} significantly improved the efficiency of the program, as they allowed for specific conditions to be added during tuple combinations. This reduced the total number of tuples being generated, in turn reducing the number of tuples that needed to be checked. See Remark \ref{rem:ShiodaTupleComputation2} for further improvements we made in the special case where $m=p^2$.
\end{remark}

We now define the terms \emph{exceptional} and \emph{indecomposable} in the context of the tuples in $\mathfrak B_m^d$.

\begin{definition} \label{def:exceptional} 
    We say that a tuple $\beta\in \mathfrak B_m^d$ is \textbf{exceptional} if it is not entirely made up of pairs $b_i, b_j$ such that $b_i+b_j\equiv 0 \pmod m$. 
\end{definition}

\begin{definition} \label{def:indecomposable} 
    We say that $\beta\in \mathfrak B_m^d$ is \textbf{indecomposable} if no proper subset (with an even number of elements) of $\{b_1,b_2,\ldots,b_{2d}\}$ adds to a multiple of $m$. Otherwise, we say that $\alpha$ is \textbf{decomposable}. An element is decomposable if it can be split into tuples coming from $\mathfrak B^{d_1}_{m},\ldots,\mathfrak B^{d_k}_{m}$, where $\sum_{i=1}^k d_i=d$ and $d_i<d$.
\end{definition}

Note that indecomposable tuples are also exceptional.

\begin{example}\label{ex:indecomposabletuples_m25}

For $m = 25$ and $d = 3$, there are four indecomposable tuples. These tuples are:
    \begin{itemize}
        \item (1, 6, 11, 16, 20, 21)
        \item (2, 7, 12, 15, 17, 22)
        \item (3, 8, 10, 13, 18, 23)
        \item (4, 5, 9, 14, 19, 24)
    \end{itemize}
We can easily verify that these tuples contain no proper subset of even size that sums to a multiple of $m$. In the case of $m = 25, d = 3$, there are 2971 tuples that satisfy properties \ref{def:Bset_increasingentries} and \ref{def:Bset_sumofentries} of Definition \ref{def:theBset}. After adding property \ref{def:Bset_tbeta}, that number is narrowed down to 224 tuples. From these 224 tuples, only 4 are exceptional tuples, all of which are indecomposable as well. 
\end{example}

A natural question to ask is: how many indecomposable tuples are there for a given pairing of $m$ and $d$? Shioda provides a lower bound for some values of $m$ and $d$ in \cite[Lemma 5.5]{Shioda82}. In our work we are focused on $m=p^2$, and the following corollary specifies Shioda's result to this case.

\begin{corollary}\label{cor:indecomptuples_lowerbound}
    Let $m=p^2$ and $d=\frac{p+1}{2}$. Then the number of indecomposable tuples in $\mathfrak B_m^d$, denoted $N_m(d)$, satisfies
$$N_m(d)\geq p-1.$$
\end{corollary}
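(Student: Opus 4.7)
The plan is to exhibit $p-1$ explicit indecomposable tuples, one for each residue class $a \pmod{p}$ with $1 \le a \le p-1$. For each such $a$, I would set
\[
\beta_a := \{a + kp \;:\; 0 \le k \le p-1\} \cup \{p^2 - pa\},
\]
sorted in increasing order. These $p-1$ tuples are pairwise distinct because the ``arithmetic-progression'' part of $\beta_a$ lies entirely in the residue class $a \bmod p$, while the lone extra element $p^2 - pa$ is divisible by $p$; the value of $a$ is thus recoverable from $\beta_a$.

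The first step is to verify that each $\beta_a$ belongs to $\mathfrak{B}_{p^2}^{d}$, where $d = (p+1)/2$. The tuple has $p+1 = 2d$ distinct entries in $\{1, \ldots, p^2-1\}$, so property \ref{def:Bset_increasingentries} holds after sorting. Property \ref{def:Bset_sumofentries} is a direct calculation:
\[
\sum_{k=0}^{p-1}(a+kp) + (p^2 - pa) = pa + \tfrac{p^2(p-1)}{2} + p^2 - pa = \tfrac{p^2(p+1)}{2} = dm.
\]
For property \ref{def:Bset_tbeta}, fix $t \in (\mathbb{Z}/p^2\mathbb{Z})^\times$. Because $\gcd(t,p) = 1$, the map $k \mapsto tk \bmod p$ permutes $\{0, 1, \ldots, p-1\}$, which after a short bookkeeping calculation shows
\[
\sum_{k=0}^{p-1}\langle t(a+kp)\rangle_{p^2} = p\langle ta\rangle_p + \tfrac{p^2(p-1)}{2},
\]
together with $\langle t(p^2 - pa)\rangle_{p^2} = p(p - \langle ta\rangle_p)$. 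Summing and dividing by $p^2$ yields $|t \cdot \beta_a| = (p+1)/2 = d$, as required.

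For indecomposability, I would reduce modulo $p$: each $a + kp$ contributes $a$, and $p^2 - pa$ contributes $0$. Suppose $\emptyset \ne S \subsetneq \beta_a$ has even size and $\sum_{b \in S} b \equiv 0 \pmod{p^2}$; let $j$ count the number of elements of $S$ drawn from the arithmetic-progression part. Then $ja \equiv 0 \pmod{p}$, and since $\gcd(a,p)=1$ this forces $j \in \{0, p\}$. The case $j = 0$ gives $|S| \le 1$, which is ruled out by evenness and nonemptiness; the case $j = p$ gives $|S| \in \{p, p+1\}$, and since $p$ is odd the only even value is $|S| = 2d$, i.e., $S = \beta_a$, contradicting the properness of $S$. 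Hence each $\beta_a$ is indecomposable, so $N_{p^2}(d) \ge p-1$.

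The main obstacle is really the \emph{discovery} of the construction; once the family $\{\beta_a\}$ is guessed (which for $p = 5$ can be read off directly from Example \ref{ex:indecomposabletuples_m25}), each of the verifications above is short. The key structural observation underlying property \ref{def:Bset_tbeta} is that multiplying the ``AP part'' of $\beta_a$ by a unit $t$ merely permutes its residues modulo $p^2$, and the single extra term $p^2 - pa$ provides exactly the correction needed to make the total weight $|t \cdot \beta_a|$ independent of $t$.
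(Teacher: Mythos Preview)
Your proof is correct and uses the same construction the paper draws on: your tuples $\beta_a$ are exactly the tuples $\beta_i$ of Equation~\eqref{eqn:ShiodaTuple}, which the paper takes from the proof of \cite[Lemma~5.5]{Shioda82}. The paper obtains Corollary~\ref{cor:indecomptuples_lowerbound} simply by specializing Shioda's lemma, whereas you supply the verification of properties~\ref{def:Bset_increasingentries}--\ref{def:Bset_tbeta} and indecomposability directly; your argument for property~\ref{def:Bset_tbeta} (that multiplication by $t$ permutes the $p$ residues in $\{1,\dots,p^2-1\}$ congruent to $ta \bmod p$) and your mod-$p$ argument for indecomposability are both clean and complete, so the two approaches are essentially identical in content, with yours being the more self-contained write-up.
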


In general, it seems difficult to predict the exact number of indecomposable tuples for a given $m$ and $d$. However, we will see below that the lower bound given in Lemma \ref{cor:indecomptuples_lowerbound} is the exact number of indecomposable tuples in $\mathfrak B_m^d$ for $m=p^2$ and $d=\frac{p+1}{2}$. Moreover there are no additional indecomposable tuples for $m=p^2$ and any other value of $d>1$.

Theorem 5.2 of \cite{Shioda82} describes the correspondence between the set $\mathfrak B_m^d$ in Definition \ref{def:theBset} and the Hodge classes of $\Jac(C_m)$. We present the part of the result that is relevant to our work below and encourage the reader to read the original text for the complete result. 

\begin{theorem}\cite[Theorem 5.2]{Shioda82}\label{thm:ShiodaHodgeClassTuple}
    Assume $m$ is odd. The Hodge classes on the Jacobian variety $\Jac(C_m)$ have the following description:
    $$\mathscr B^d(\Jac(C_m))=\bigoplus_{(b_1,\ldots,b_{2d})\in \mathfrak B_m^d} \mathbb C\, \omega_{b_1}\wedge \cdots \wedge \omega_{b_{2d}}. $$ 
\end{theorem}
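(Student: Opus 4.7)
The plan is to follow Shioda's approach: build an eigenbasis for $H^1(\Jac(C_m),\C)$ compatible with the cyclic automorphism $\alpha$, wedge up to $H^{2d}$, use Shioda's weight formula to pin down Hodge types, restrict to the $\alpha^*$-invariant subspace (inside which Hodge classes live, by the CM structure), and enforce $\Q$-rationality via the Galois action of $(\Z/m\Z)^\times$.

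First, I would produce an eigenbasis for $H^1(\Jac(C_m),\C)$. By Equation \eqref{eqn:pullbackdifferential}, the holomorphic differentials $\omega_1,\ldots,\omega_g$ of Equation \eqref{eqn:basisdifferentials} are $\alpha^*$-eigenvectors in $H^{1,0}$ with distinct eigenvalues $-\zeta_m^j$; their complex conjugates, relabeled $\omega_{m-j}$ for $j=1,\ldots,g$, supply the $g$ missing eigenvectors in $H^{0,1}$ with eigenvalues $-\zeta_m^{m-j}$. Wedging then produces a basis of $H^{2d}(\Jac(C_m),\C)$ indexed by strictly increasing tuples (property \ref{def:Bset_increasingentries}), and each $\omega_\beta:=\omega_{b_1}\wedge\cdots\wedge\omega_{b_{2d}}$ is an $\alpha^*$-eigenvector of eigenvalue $\zeta_m^{\sum b_i}$.

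Next I would cut down to the $\alpha^*$-invariant subspace. Since $\Jac(C_m)$ is of CM type by $\Q(\zeta_m)$, the Hodge group sits inside the CM torus and contains the image of $\alpha^*$ (which corresponds to the unit-circle element $-\zeta_m\in\Q(\zeta_m)^\times$), so every Hodge class is $\alpha^*$-fixed. This forces $\mathscr B^d\subseteq H^{2d}(\Jac(C_m),\C)^{\alpha^*}$ and imposes $\sum b_i\equiv 0\pmod m$ on every $\omega_\beta$ appearing in $\mathscr B^d$ — property \ref{def:Bset_sumofentries}. Within this $\alpha^*$-fixed subspace, $\Gal(\Q(\zeta_m)/\Q)\simeq(\Z/m\Z)^\times$ acts by permuting the eigenbasis: after fixing normalizations compatible with Shioda's cyclic-cover framework, the element $t$ sends $\omega_b$ to a scalar multiple of $\omega_{\langle tb\rangle_m}$, hence $\omega_\beta$ to $\omega_{t\cdot\beta}$. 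A rational class is therefore a $\Q$-linear combination of Galois-orbit sums $\sum_t\omega_{t\cdot\beta}$, and for such a sum to lie in $H^{d,d}$ every translate $\omega_{t\cdot\beta}$ must itself be of Hodge type $(d,d)$; by Shioda's weight formula this is equivalent to $|t\cdot\beta|=d$ for every $t\in(\Z/m\Z)^\times$, which is property \ref{def:Bset_tbeta}. Conversely, each $\beta\in\mathfrak B_m^d$ gives a Galois-orbit sum that is a genuine rational Hodge class, and $\omega_\beta$ lies in its $\C$-span.

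The main technical obstacles are the verification of Shioda's weight formula and the precise normalization of the Galois action on $\{\omega_b\}$. Both are carried out in \cite{Shioda82} via an explicit identification of $H^1(\Jac(C_m),\C)$ with a cohomology group on an auxiliary cyclic cover of $\mathbb P^1$; I would invoke these results rather than rederive them. Once they are in place, the three conditions of Definition \ref{def:theBset} correspond exactly to (1) basis indexing on $\Lambda^{2d}H^1$, (2) $\alpha^*$-invariance coming from CM, and (3) Hodge-type preservation under Galois, yielding the decomposition $\mathscr B^d(\Jac(C_m))=\bigoplus_{\beta\in\mathfrak B_m^d}\C\,\omega_\beta$ as claimed.
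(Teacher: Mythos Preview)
The paper gives no proof of this statement; it is quoted directly from \cite[Theorem~5.2]{Shioda82} and the reader is explicitly referred there for the argument. Your outline is a faithful reconstruction of Shioda's approach and, like the paper, ultimately defers the two hard technical steps (the weight formula and the precise $\Q$-structure on the eigenbasis) to \cite{Shioda82}.

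One point to tighten: the assertion that $\Hg(A)$ \emph{contains} the image of $\alpha^*$ is not obvious and is not the mechanism in Shioda's proof. The element $\alpha^*$ lives in the CM torus, but $\Hg(A)$ can be a proper $\Q$-subtorus, and membership of a specific root of unity does not come for free. Fortunately this step is dispensable: condition~\ref{def:Bset_sumofentries} of Definition~\ref{def:theBset} already follows from condition~\ref{def:Bset_tbeta} at $t=1$ (this is exactly Lemma~\ref{lemma:betasumdm}), so you can drop the separate $\alpha^*$-invariance argument without loss. The cleaner route is the one you sketch afterwards: $\Hg(A)$ is a $\Q$-torus diagonalised by $\{\omega_b\}$, so $\mathscr B^d=H^{2d}(A,\C)^{\Hg(A)}$ is spanned by monomials, and $\Q$-rationality of $\Hg(A)$ forces the contributing index set to be $(\Z/m\Z)^\times$-stable; together with the weight formula this yields exactly $\mathfrak B_m^d$.
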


The indecomposable Hodge classes are precisely those coming from indecomposable tuples in the correspondence in Theorem \ref{thm:ShiodaHodgeClassTuple}, and in the next section we use this correspondence to better understand the degeneracy of $\Jac(C_{p^2}).$

\subsection{Complete description of the indecomposable Hodge classes when $m=p^2$}

Theorem \ref{thm:ShiodaHodgeClassTuple} above gives the correspondence between Hodge classes and the tuples from Definition \ref{def:theBset}. We are particularly interested in characterizing the indecomposable Hodge classes of $\Jac(C_{p^2})$ since this will enable us to give an explicit description of the identity component of its Sato-Tate group. In this section, we give a complete description of the indecomposable tuples (Theorem \ref{thm:indecomposabletuples}) and the Hodge classes (Corollary \ref{cor:indecomposableHodgeclasses}).

In the proof of \cite[Lemma 5.5]{Shioda82}, Shioda defines a family of indecomposable tuples in order to give a lower bound on $N_m(d)$. We now specify this family to the case $m=p^2$ and $d=\frac{p+1}{2}$. For $1\leq i \leq p-1$, define
\begin{align}\label{eqn:ShiodaTuple}
    \beta_i:=(i, i+p, i+2p,\ldots, i+(p-1)p, p(p-i)).
\end{align}

Note that the values appearing in the tuple $\beta_i$ are all between 1 and $p^2-1$. However, the entries of the tuple $\beta_i$ may need to be permuted in order to be an element of the set $\mathfrak B_m^d$ (see property \ref{def:Bset_increasingentries} of Definition \ref{def:theBset}). In a slight abuse of notation, when we write $\beta_i$ we mean the permuted tuple.

\begin{remark}\label{rem:ShiodaTupleComputation2}
After analyzing the tuples, we observed that when $m$ is restricted to $p^2$, all indecomposable tuples (see Definition \ref{def:indecomposable}) come from Equation \eqref{eqn:ShiodaTuple}. We will prove that this is the case in Theorem \ref{thm:indecomposabletuples} below. This allowed us to create another program, specifically for the case of $m = p^2$, in which we implemented this equation using for loops.
\end{remark}

We now present three results that can be used to better understand the $\beta_i$ tuples. 

\begin{lemma}\label{lemma:p(p-i)position}
    Let $\beta_i$ be the tuple defined in Equation \eqref{eqn:ShiodaTuple}. When the entries in the tuple are written in increasing order then the index of the value $p(p-i)$ in $\beta_i$ is $p-i+1$. 
\end{lemma}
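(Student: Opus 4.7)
The plan is to work directly from the form of $\beta_i$. Before sorting, the tuple consists of two pieces: the $p$ entries $i, i+p, i+2p,\ldots, i+(p-1)p$, which are already listed in increasing order, together with the single additional entry $p(p-i) = p^2 - pi$. Thus determining the position of $p(p-i)$ in the sorted tuple reduces to counting how many of the values $i+kp$, for $k \in \{0, 1, \ldots, p-1\}$, are strictly less than $p(p-i)$, and adding $1$.

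First I would check that $p(p-i)$ does not coincide with any $i+kp$. If $i + kp = p^2 - pi$, then $i(p+1) = p(p-k)$, so $p \mid i(p+1)$, and since $\gcd(p,p+1)=1$ we would get $p \mid i$, contradicting $1 \leq i \leq p-1$.

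Next I would solve the inequality $i + kp < p(p-i) = p^2 - pi$. Rearranging gives
\begin{equation*}
    k \;<\; p - i - \tfrac{i}{p}.
\end{equation*}
Because $0 < i/p < 1$, the largest integer $k$ satisfying this inequality is $k = p-i-1$. Consequently the values $k \in \{0,1,\ldots,p-i-1\}$ produce exactly $p-i$ entries of $\beta_i$ strictly smaller than $p(p-i)$, while the remaining $i$ values $k \in \{p-i, \ldots, p-1\}$ produce entries strictly larger. Sorting the entries in increasing order therefore places $p(p-i)$ in position $p-i+1$, which is the claim.

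There is no real obstacle here; the argument is a direct arithmetic verification. The only subtlety is indexing conventions (whether we count from $0$ or $1$), so I would take care in the final counting step to confirm that having $p-i$ strictly smaller entries puts $p(p-i)$ at the $(p-i+1)$-st position in the sorted tuple.
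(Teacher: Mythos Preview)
Your proof is correct and follows essentially the same approach as the paper: both arguments locate $p(p-i)$ between the consecutive entries $i+(p-i-1)p$ and $i+(p-i)p$ of the arithmetic progression, concluding that exactly $p-i$ entries precede it. Your version is slightly more explicit in checking non-coincidence and solving the inequality, but the underlying idea is identical.
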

\begin{proof}
Let $1\leq i \leq p-1$. It is clear that $p(p-i)<i +(p-i)p$. Furthermore, we see that
$$i+(p-i-1)p=p(p-i)-(p-i)<p(p-i).$$
Thus, $i+(p-i-1)p < p(p-i) <i+(p-i)p$. This proves the result since the index of the value $i+(p-i-1)p$ in $\beta_i$ is $p-i$.
\end{proof}

\begin{lemma}\label{lemma:distinctbeta}
    Let $\beta_i$ and $\beta_j$ be two tuples of the form given in Equation \eqref{eqn:ShiodaTuple} with $i\not=j$. Then $\beta_i$ and $\beta_j$ have no entries in common.
\end{lemma}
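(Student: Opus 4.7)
The plan is to argue via residues modulo $p$. For each $1 \le i \le p-1$, inspect the multiset of entries of $\beta_i$ directly from its definition in Equation \eqref{eqn:ShiodaTuple}: the first $p$ coordinates $i, i+p, i+2p, \ldots, i+(p-1)p$ all reduce to $i \pmod p$, while the last coordinate $p(p-i)$ reduces to $0 \pmod p$. Hence the set of entries of $\beta_i$ is contained in the union of the two residue classes $\{i \bmod p,\; 0 \bmod p\}$, and similarly for $\beta_j$.

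Since $1 \le i, j \le p-1$ with $i \neq j$, the residues $i, j$ are distinct and both nonzero modulo $p$. Consequently the only residue class that can possibly contain entries from both $\beta_i$ and $\beta_j$ is the class $0 \pmod p$. The unique entry of $\beta_i$ lying in this class is $p(p-i)$, and the unique entry of $\beta_j$ lying in this class is $p(p-j)$. So any common entry would force $p(p-i) = p(p-j)$, i.e.\ $i = j$, contradicting $i \neq j$.

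I would close by remarking that the rearrangement into increasing order mentioned just before the lemma is irrelevant here, since the statement concerns the underlying sets of entries and the argument compares them class-by-class modulo $p$. There is no real obstacle in this proof; it is essentially a one-line observation once one notices that each $\beta_i$ occupies exactly two residue classes modulo $p$, namely $i$ and $0$.
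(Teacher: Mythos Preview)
Your argument is correct. The paper's proof reaches the same conclusion but proceeds by a three-case analysis: it first checks $p(p-i)\neq p(p-j)$, then separately compares entries of the form $i+hp$ against $j+kp$ by writing $k=h+a$ and arguing that $j+ap\neq i$ since $1\leq i\leq p-1$, and finally observes that $p\mid p(p-j)$ while $p\nmid(i+hp)$. Your proof unifies all three cases into a single observation about residues modulo~$p$: every entry of $\beta_i$ lies in the residue class $i$ or $0$ modulo~$p$, and the classes $i$ and $j$ are distinct and nonzero. This is cleaner and shorter than the paper's case split, though the paper's third case already contains the divisibility idea you exploit throughout. Your closing remark about the irrelevance of the reordering is also a helpful clarification that the paper leaves implicit.
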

\begin{proof}
Let $\beta_i$ and $\beta_j$ be as stated in the lemma. It is clear that $p(p-i)\not=p(p-j)$ when $i\not= j$. We will prove that the other entries appearing in the tuples are also distinct.

We first consider the entries $i+hp$ from $\beta_i$ and $j+kp$ from $\beta_j$, where $1\leq h,k\leq p-1$. If $h=k$, then it is clear that $i+hp\not= j+kp$ since $i\not= j$. Without loss of generality, we now assume that $h< k$ and write $k=h+a$ for some positive integer $a$. Then 
    $$j+kp=j+(h+a)p=j+ap + hp.$$
Note that $j+ap\not= i$ since $1\leq i \leq p-1$. Thus, $j+kp=j+ap + hp\not= i+hp.$

Finally, the entries $i+hp$ from $\beta_i$ and $p(p-j)$ from $\beta_j$ are distinct. Indeed, we easily see that $p\mid p(p-j)$ but $p\nmid(i+hp)$, and so the entries cannot be equal.

    Thus, $\beta_i$ and $\beta_j$ have no entries in common.
\end{proof}

\begin{corollary}
    All values in the range $[1,p^2-1]$ appear in exactly one tuple $\beta_i$.
\end{corollary}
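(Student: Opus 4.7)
The plan is to prove the corollary by a simple counting argument that combines Lemma \ref{lemma:distinctbeta} with a check that each $\beta_i$ has the right size and that its entries lie in the stated range.

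First I would count entries. Each tuple $\beta_i = (i, i+p, i+2p, \ldots, i+(p-1)p, p(p-i))$ has exactly $p+1$ entries (the $p$ values $i+kp$ for $0 \le k \le p-1$, plus the extra entry $p(p-i)$). Since $i$ ranges over $1, 2, \ldots, p-1$, there are $p-1$ such tuples, giving a total of $(p-1)(p+1) = p^2-1$ entries counted with multiplicity.

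Next I would verify that every entry lies in the range $[1, p^2-1]$. For entries of the form $i + kp$ with $1 \le i \le p-1$ and $0 \le k \le p-1$, the minimum value is $i \ge 1$ and the maximum is $(p-1) + (p-1)p = p^2 - 1$. For the final entry $p(p-i)$ with $1 \le i \le p-1$, we have $p \le p(p-i) \le p(p-1) = p^2-p$. So all entries are in $[1, p^2-1]$.

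Now I would combine these observations with Lemma \ref{lemma:distinctbeta}, which states that distinct tuples $\beta_i, \beta_j$ share no entries. This guarantees that the $p^2-1$ entries counted above are all pairwise distinct as $i$ varies. Together with the fact that within a single tuple $\beta_i$ the entries are distinct (immediate from the construction, as the values $i + kp$ are congruent to $i \bmod p$ and hence different from $p(p-i)$, and different from one another for different $k$), we obtain $p^2-1$ distinct values all lying in $[1, p^2-1]$. Since $|[1,p^2-1]| = p^2-1$, the union must be exactly $[1, p^2-1]$, and each value appears in exactly one $\beta_i$.

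There is no real obstacle here; the substance is carried entirely by Lemma \ref{lemma:distinctbeta}, and the corollary is the pigeonhole consequence of having exactly the right number of distinct entries in a set of exactly that size.
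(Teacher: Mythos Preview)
Your proof is correct and follows essentially the same counting/pigeonhole approach as the paper: both argue that the $p-1$ tuples of length $p+1$ contribute $(p-1)(p+1)=p^2-1$ distinct entries lying in $[1,p^2-1]$, forcing a bijection. You supply a bit more detail than the paper (explicitly checking the range and that entries within a single $\beta_i$ are distinct), but the argument is the same.
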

\begin{proof}
    This follows from the fact that there are $p-1$ tuples of length $(p+1)$ of the form $\beta_i$ given in Equation \eqref{def:Bset_tbeta} and that every entry is distinct by Lemma \ref{lemma:distinctbeta}. Thus, there are $(p+1)(p-1)=p^2-1$ values appearing, all of which are positive and less than $p^2$.
\end{proof}

These results lead us to one of the main results of this section, which is a complete description of the indecomposable tuples for $m=p^2$.

\begin{theorem}\label{thm:indecomposabletuples}
    Let $m=p^2$ and $d=\frac{p+1}{2}$. The indecomposable tuples in $\mathfrak B_m^d$ are the $p-1$ tuples of the form $\beta_i$ given in Equation \eqref{eqn:ShiodaTuple}. Furthermore, there are no indecomposable in $\mathfrak B_m^d$ when $d\not=\frac{p+1}{2}$.
\end{theorem}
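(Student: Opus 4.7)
Proof plan:

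The proof has two directions. For the forward direction---showing each $\beta_i$ lies in $\mathfrak B_{p^2}^{(p+1)/2}$ and is indecomposable---I would verify the conditions of Definition \ref{def:theBset}. Property (2) follows from the direct calculation $\sum_{k=0}^{p-1}(i + kp) + p(p-i) = p^2(p+1)/2 = dm$. Property (3) follows by observing that multiplication by $\bar t \in (\Z/p\Z)^\times$ permutes the family $\{\beta_j\}_j$ (sending $\beta_i \mapsto \beta_{\langle\bar t i\rangle_p}$), while the subgroup $H := \{1 + jp : 0 \leq j \leq p-1\}$ stabilizes each $\beta_i$ as a multiset, so $|t \cdot \beta_i| = d$ for every unit $t$. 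For indecomposability: any proper even subset $S \subset \beta_i$ has $a$ Type I entries (all $\equiv i \pmod p$) and $b \in \{0,1\}$ Type II entries, giving $\sum S \equiv ai \pmod p$. Since $\gcd(i,p)=1$, this forces $a \in \{0,p\}$, and a short case check on $(a,b)$ rules out any nontrivial even proper subset.

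For the converse, let $\beta \in \mathfrak B_{p^2}^d$ be indecomposable. Classify each entry as Type I ($\gcd(b,p)=1$) or Type II ($p \mid b$), and for $r \in \{1,\ldots,p-1\}$ let $n_r$ count Type I entries of $\beta$ in mod-$p$ class $r$, and let $s$ count Type II entries. The main technical step---which I expect to be the principal obstacle---is showing $n_r \in \{0, p\}$ for each $r$, so that Type I entries form a union of complete mod-$p$ residue classes. I would prove this via property (3) applied to $t \in H$, which fixes Type II entries and acts on Type I entries via the identity $(1+jp)(r+pq) \equiv r + p(q + jr) \pmod{p^2}$; writing $|(1+jp)\cdot\beta|=d$ in terms of the $q$-coordinates yields a Fourier-type condition on $\F_p$ that, combined with analogous constraints from multiplication by $\bar t \in (\Z/p\Z)^\times$, forces $Q_r := \{q : r + pq \in \beta\}$ to be either $\emptyset$ or all of $\F_p$.

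Granted this, let $R := \{r : n_r = p\}$ and write the Type II entries as $\{k_1 p,\ldots,k_s p\}$ with $k_j \in \{1,\ldots,p-1\}$. Using the identity ``sum of complete class $r$'' $= pr + p^2(p-1)/2 \equiv pr \pmod{p^2}$ together with property (3) for $\beta$, I would show that the reduced tuple $\bar\beta_p := (\bar r_1,\ldots,\bar r_{|R|},\bar k_1,\ldots,\bar k_s) \in \F_p^{|R|+s}$ lies (as a multiset) in $\mathfrak B_p^{(|R|+s)/2}$. By Lemma \ref{lemma:nondegeneratefactors}, $\Jac(C_p)$ is nondegenerate, so $\bar\beta_p$ decomposes into pairs summing to $p$. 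Each such pair lifts to an even subset of $\beta$ summing to $0\pmod{p^2}$: a pair $\{\bar r_i,\bar r_j\}$ gives the $2p$-element subset consisting of two full classes; a mixed pair $\{\bar r_i,\bar k_j\}$ gives a $(p+1)$-element subset; and a pair $\{\bar k_i,\bar k_j\}$ gives the $2$-element subset $\{k_i p,k_j p\}$. Indecomposability of $\beta$ forces each lifted subset to equal all of $\beta$, so $|R| + s \leq 2$. A brief case check eliminates $(|R|,s) = (2,0)$ (decomposable via the $2$-element subset $\{r_1, r_2 + p(p-1)\}$ summing to $p^2$) and $(|R|,s) = (0,2)$ ($|\beta|=2$ and $d=1$, a trivial pair that is not exceptional). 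The remaining case $(|R|,s) = (1,1)$ combined with the sum condition $r + k_0 \equiv 0 \pmod p$ forces $k_0 = p - r$, so $\beta = \beta_r$ with $d = (p+1)/2$; this simultaneously shows that no indecomposable tuples exist for $d \neq (p+1)/2$.
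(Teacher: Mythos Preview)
Your approach is genuinely different from the paper's. The paper does not attempt a direct combinatorial classification; instead it argues top-down from the Hodge group. Having established in Corollary~\ref{cor:HodgeGroup} that $\Hg(\Jac(C_{p^2}))\simeq \U(1)^{g'}$ with $g'=p(p-1)/2$, the paper observes (via Theorem~\ref{thm:Hodge} and Equation~\eqref{eqn:Hodgegroupaction}) that every indecomposable tuple imposes a multiplicative relation on the diagonal entries of a generic $U\in\U(1)^g$, that the $\beta_i$ have pairwise disjoint supports (Lemma~\ref{lemma:distinctbeta}) and hence give independent relations, and that these $p-1$ relations already account for the full drop in dimension from $\U(1)^g$ to $\U(1)^{g'}$. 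Any further indecomposable tuple would force an additional relation, contradicting the known size of the Hodge group. Your bottom-up strategy, by contrast, would yield a self-contained combinatorial proof independent of the Hodge-theoretic input of Section~\ref{sec:degeneracyresults}, which is an attractive feature.

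However, your converse has a genuine gap precisely at the step you flag as the principal obstacle. You assert that property~(3) of Definition~\ref{def:theBset}, applied to $t\in H=\{1+jp\}$ together with lifts of $(\Z/p\Z)^\times$, forces each $Q_r$ to be empty or all of $\F_p$. This is false as stated: for $p=3$ the tuple $\beta=(1,2,7,8)$ lies in $\mathfrak B_9^2$ (one checks $|t\beta|=2$ for every $t\in(\Z/9\Z)^\times$), yet $Q_1=Q_2=\{0,2\}$, so $n_1=n_2=2\notin\{0,3\}$. Of course $(1,2,7,8)$ is decomposable via $\{1,8\}\cup\{2,7\}$, so it does not contradict the theorem---but it shows that property~(3) alone cannot carry this step. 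Indecomposability must be used already in the proof that $n_r\in\{0,p\}$, and your sketch gives no indication of how. A secondary issue: nothing in your setup prevents some $r\in R$ from coinciding with some $k_j$, so the reduced multiset $\bar\beta_p$ need not have distinct entries, and the appeal to Lemma~\ref{lemma:nondegeneratefactors} via Shioda's correspondence requires more care.
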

\begin{proof}   
   The result follows from the above lemmas and corollaries about the tuples, the correspondence between the tuples and Hodge classes from Theorem \ref{thm:ShiodaHodgeClassTuple}, and the degeneracy results of Section \ref{sec:degeneracyresults}. 

    Let $d=\frac{p+1}{2}$. Theorem \ref{thm:ShiodaHodgeClassTuple} gives the following correspondence between the Hodge classes and the tuples in the set $\mathfrak B_m^d$
    \begin{equation}\label{eqn:tupleHodgecorrespondence}
        (b_1,b_2,\ldots,b_{2d}) \longleftrightarrow \omega_{b_1}\wedge\omega_{b_2}\wedge \cdots \wedge \omega_{b_{2d}}.
    \end{equation}
   Let $\beta_i$ be a tuple of the form in Equation \eqref{eqn:ShiodaTuple}. Then $\beta_i$ can be identified with the Hodge class
   \begin{equation}\label{eqn:ShiodaHodgeClass}
       \nu_i=\omega_{i}\wedge\omega_{i+p}\wedge \omega_{i+2p}\wedge\cdots\wedge \omega_{i+(p-1)p}\wedge \omega_{p(p-i)}.
   \end{equation}
   
   The result in Theorem \ref{thm:Hodge} (i.e.~\cite[Theorem 17.3.3]{BirkenhakeLange2004}) gives the relationship between the Hodge group and the Hodge ring. The discussion after Corollary \ref{cor:HodgeGroup} tells us that we can identify elements of the Hodge group of $\Jac(C_{p^2})$ with $2g\times 2g$ matrices $U \in \U(1)^g$ of the form
   \begin{equation}\label{eqn:generalUmatrix}
       U=\diag(u_1,\overline u_1,u_2,\overline u_2, \ldots, u_g, \overline u_g),
   \end{equation}
   where $g$ is the genus of the curve $C_{p^2}$ and $u_j\overline u_j=1$ for $1\leq j\leq g$. By Theorem \ref{thm:Hodge}, we must have $U\cdot \nu_i=\nu_i$, where 
   \begin{equation}\label{eqn:Hodgegroupaction}
       U\cdot \nu_i = u_{i}u_{i+p}u_{i+2p}\cdots u_{i+(p-1)p}u_{p(p-i)}\nu_i.
   \end{equation}
    In other words, we must have $u_{i}u_{i+p}u_{i+2p}\cdots u_{i+(p-1)p}u_{p(p-i)}=1$. Thus, each tuple $\beta_i$ leads to a single relation among the entries of the matrix $U$. 
    
    As shown in Lemma \ref{lemma:distinctbeta}, the entries in tuples $\beta_i$ and $\beta_j$ are distinct whenever $i\not=j$, and so each of the tuples will lead to a different relation among the entries of the matrix $U$. Since there are $p-1$ tuples of the form $\beta_i$, this accounts for all of the extra relations in the matrix $U$ (see the discussion after Corollary \ref{cor:HodgeGroup}). Thus, there cannot be additional indecomposable tuples in $\mathfrak B_m^d$ or in $\mathfrak B_m^{d'}$ for any positive integer $d'$  since these would lead to additional relations in the matrix $U$.   
\end{proof}

\begin{corollary}\label{cor:indecomposableHodgeclasses}
    Let $d=\frac{p+1}{2}$. The indecomposable Hodge classes of codimension $d$ are given by
    $$\nu_i=\omega_{i}\wedge\omega_{i+p}\wedge \omega_{i+2p}\wedge\cdots\wedge \omega_{i+(p-1)p}\wedge \omega_{p(p-i)},$$
    where $1\leq i \leq p-1$.
\end{corollary}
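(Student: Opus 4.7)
The plan is to deduce this corollary directly from Theorem \ref{thm:indecomposabletuples}, which already classifies the indecomposable tuples in $\mathfrak B_{p^2}^d$, combined with the bijection between tuples and Hodge classes supplied by Theorem \ref{thm:ShiodaHodgeClassTuple}. Essentially the only remaining work is to verify that this bijection is compatible with the notion of indecomposability on each side, after which one simply writes down the Hodge class $\nu_i$ attached to each tuple $\beta_i$.

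First I would recall the correspondence \eqref{eqn:tupleHodgecorrespondence}: every Hodge class in $\mathscr B^d(\Jac(C_{p^2}))$ is a $\mathbb{C}$-linear combination of wedge products of the form $\omega_{b_1}\wedge \cdots \wedge \omega_{b_{2d}}$, with $(b_1,\dots,b_{2d}) \in \mathfrak B_{p^2}^d$. Under this identification, the definition of a decomposable tuple (Definition \ref{def:indecomposable}) — namely that one can split $\{b_1,\dots,b_{2d}\}$ into proper even-sized subsets summing to multiples of $p^2$, each lying in some $\mathfrak B_{p^2}^{d_i}$ with $d_i < d$ and $\sum d_i = d$ — translates precisely into the statement that the corresponding wedge product factors as a wedge product of Hodge classes of strictly lower codimension. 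Thus a Hodge class associated to $\beta \in \mathfrak B_{p^2}^d$ lies in $\mathscr B^1(A) + \cdots + \mathscr B^{d-1}(A)$ (in the wedge-product sense) if and only if $\beta$ is decomposable. So indecomposable Hodge classes correspond exactly to indecomposable tuples.

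Next I would invoke Theorem \ref{thm:indecomposabletuples}: for $m = p^2$ the set $\mathfrak B_{p^2}^d$ contains no indecomposable tuples unless $d = \tfrac{p+1}{2}$, and in that codimension the indecomposable tuples are exactly the $p-1$ tuples $\beta_i = (i, i+p, \dots, i+(p-1)p, p(p-i))$ from Equation \eqref{eqn:ShiodaTuple}. Translating each $\beta_i$ across the correspondence \eqref{eqn:tupleHodgecorrespondence} yields the explicit Hodge class
\[
\nu_i = \omega_{i}\wedge\omega_{i+p}\wedge \omega_{i+2p}\wedge\cdots\wedge \omega_{i+(p-1)p}\wedge \omega_{p(p-i)},
\]
for $1\leq i \leq p-1$, which is the claim. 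The only subtle point is the compatibility of the two notions of indecomposability in the first step, but this is essentially built into the definitions, so I expect no genuine obstacle — the corollary is a clean restatement of Theorem \ref{thm:indecomposabletuples} on the Hodge-theoretic side of Shioda's correspondence.
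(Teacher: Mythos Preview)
Your proposal is correct and matches the paper's approach exactly: the paper's proof is a one-line appeal to Theorem \ref{thm:indecomposabletuples} together with the correspondence in Equation \eqref{eqn:tupleHodgecorrespondence}. Your additional paragraph verifying that indecomposability of tuples corresponds to indecomposability of Hodge classes is a reasonable elaboration of what the paper leaves implicit.
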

\begin{proof}
    This follows from Theorem \ref{thm:indecomposabletuples} and the correspondence in Equation \eqref{eqn:tupleHodgecorrespondence}.
\end{proof}

\begin{remark}\label{rem:conjugationconvention}
Let $\beta_i$ be a tuple of the form in Equation \eqref{eqn:ShiodaTuple}. We will modify the elements of $\beta_i$ such that every entry $b_j$ with $j >d=\frac{p+1}{2}$ is written as $b_j - p^2$. This modification will negate elements of the tuple whose value is greater than $\frac{p^2}{2}$. This corresponds to expressing the differential $\omega_{b_j}$ as $\overline\omega_{p^2-b_j}$.
    
    After modifying the tuples in this way, we obtain pairs of tuples such that each $\beta_i$ generated from Equation \eqref{eqn:ShiodaTuple} is paired with the corresponding tuple $\beta_{p-i}$. For instance, for $p^2 = 9$, we have the tuples  $\beta_1 = (1, 4, -3, -2)$ and $\beta_2 = (2, 3, -4, -1)$. These correspond to the Hodge classes $\nu_1=\omega_1\wedge\omega_4\wedge\overline\omega_3\wedge\overline\omega_2$ and $\nu_2=\omega_2\wedge\omega_3\wedge\overline\omega_4\wedge\overline\omega_1$, respectively, and from either we can read off the effect on the Hodge group. For this reason, we will only focus on the tuples $\beta_i$ produced by Equation \eqref{eqn:ShiodaTuple} where $1\leq i \leq \frac{p-1}{2}$.
\end{remark}

\begin{lemma}\label{lemma:alwaysnegatep-i}
    Let $1\leq i \leq \frac{p-1}{2}$. When applying the conventions described in Remark \ref{rem:conjugationconvention} to the tuples $\beta_i$, the value $p(p-i)$ will always be negated. Furthermore, the Hodge class associated to the tuple $\beta_i$ will have $\overline{\omega}_{pi}$ as one of its factors.
\end{lemma}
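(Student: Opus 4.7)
The plan is a short direct computation using Lemma \ref{lemma:p(p-i)position} together with the sign convention introduced in Remark \ref{rem:conjugationconvention}. The statement really has two parts: first, that the position of $p(p-i)$ in the sorted tuple lies beyond index $d=\tfrac{p+1}{2}$, so this entry is among those that get negated; and second, that after negation the corresponding factor in $\nu_i$ is $\overline{\omega}_{pi}$.

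For the first part, I would invoke Lemma \ref{lemma:p(p-i)position}, which places $p(p-i)$ at index $p-i+1$ in the sorted $\beta_i$. For $1\le i\le \tfrac{p-1}{2}$, we have
\begin{equation*}
p-i+1 \;\ge\; p-\tfrac{p-1}{2}+1 \;=\; \tfrac{p+3}{2} \;>\; \tfrac{p+1}{2} \;=\; d,
\end{equation*}
so $p(p-i)$ sits strictly past the $d$-th slot. Consistently, one checks $p(p-i)\ge p\cdot \tfrac{p+1}{2}>\tfrac{p^2}{2}$, which is exactly the size condition singled out by Lemma \ref{lemma:betaentrybounds} and by the recipe in Remark \ref{rem:conjugationconvention}. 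Hence the convention forces us to replace $p(p-i)$ by $p(p-i)-p^2$, proving the first claim.

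For the second part, I would observe that the replacement yields $p(p-i)-p^2=-pi$, and that the convention interprets a negated entry $b_j$ as writing $\omega_{b_j}=\overline{\omega}_{p^2-b_j}$. Applied to $b_j=p(p-i)$, this gives
\begin{equation*}
\omega_{p(p-i)} \;=\; \overline{\omega}_{\,p^2-p(p-i)} \;=\; \overline{\omega}_{pi},
\end{equation*}
so $\overline{\omega}_{pi}$ appears as one of the factors in the Hodge class $\nu_i$ of Corollary \ref{cor:indecomposableHodgeclasses}, as claimed.

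There is no real obstacle here — the lemma is essentially a bookkeeping statement. The only subtlety is to make sure the inequality $p-i+1>d$ is used in the correct direction (the indexing from Lemma \ref{lemma:p(p-i)position} is the key input) and to be careful that the remaining $p-1$ entries $i, i+p, \ldots, i+(p-1)p$ of $\beta_i$ do not interfere, which is immediate since $pi$ is not among them (all the $i+kp$ are $\not\equiv 0\pmod p$ while $pi\equiv 0$). Thus the conclusion is exactly that $\nu_i$ acquires $\overline{\omega}_{pi}$ as a factor, with no other entry producing the same differential.
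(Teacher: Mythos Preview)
Your proof is correct and follows essentially the same approach as the paper: both invoke Lemma \ref{lemma:p(p-i)position} to locate $p(p-i)$ at index $p-i+1>\tfrac{p+1}{2}$ and then compute $p^2-p(p-i)=pi$ for the second claim. Your additional checks (the size inequality $p(p-i)>\tfrac{p^2}{2}$ and the observation that $pi$ cannot coincide with any $i+kp$) are fine but not strictly needed.
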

\begin{proof}
    The first statement follows from Lemma \ref{lemma:p(p-i)position}, where we proved that the index of the value $p(p-i)$ in $\beta_i$ is $p-i+1$. When $1\leq i \leq \frac{p-1}{2}$, this value will always be in the second half of the tuple since $p-i+1> p-\frac{p-1}{2}=\frac{p+1}{2}$ and the tuple is of length $p+1$. The second statement follows since $p^2-p(p-i)=pi$. 
\end{proof}
\begin{corollary}\label{cor:tuplebeta_notnegated}
    Let $1\leq i \leq \frac{p-1}{2}$. When applying the conventions described in Remark \ref{rem:conjugationconvention} to the tuples $\beta_i$, the values from $i$ to $i+\frac{p-1}{2}p$ will never be negated.
\end{corollary}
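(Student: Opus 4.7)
The plan is to reduce the statement to an elementary bound on the values in $\beta_i$. Recall from Remark \ref{rem:conjugationconvention} that an entry $b_j$ of $\beta_i$ is negated (i.e.\ replaced by $b_j - p^2$) exactly when its value exceeds $\tfrac{p^2}{2}$, since by Lemma \ref{lemma:betaentrybounds} this is equivalent to its index in the sorted tuple exceeding $d = \tfrac{p+1}{2}$. So to prove the corollary it suffices to show that the values $i,\ i+p,\ i+2p,\ \dots,\ i + \tfrac{p-1}{2}p$ all lie in the interval $\left(0, \tfrac{p^2}{2}\right)$ whenever $1 \leq i \leq \tfrac{p-1}{2}$.

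The first step is to observe that the largest of these values is $i + \tfrac{p-1}{2}p$, obtained by taking the coefficient of $p$ to be as large as possible. So the only inequality to verify is
$$i + \tfrac{p-1}{2}p < \tfrac{p^2}{2}.$$
Next, using the hypothesis $i \leq \tfrac{p-1}{2}$, I would compute directly:
$$i + \tfrac{p-1}{2}p \;\leq\; \tfrac{p-1}{2} + \tfrac{p-1}{2}p \;=\; \tfrac{(p-1)(p+1)}{2} \;=\; \tfrac{p^2 - 1}{2} \;<\; \tfrac{p^2}{2}.$$
Positivity of each value is immediate since $i \geq 1$ and $p \geq 3$.

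Combining these observations, every entry in the list $i, i+p, \ldots, i + \tfrac{p-1}{2}p$ has value strictly less than $\tfrac{p^2}{2}$, so by the criterion above none of them is negated under the convention of Remark \ref{rem:conjugationconvention}. This completes the proof. There is no real obstacle here: the content is a single arithmetic bound, and the only care required is to correctly translate the negation convention (phrased in terms of the sorted index exceeding $d$) into the equivalent condition on value via Lemma \ref{lemma:betaentrybounds}.
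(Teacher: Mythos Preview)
Your argument is correct. You translate the index-based negation rule of Remark~\ref{rem:conjugationconvention} into a value-based criterion via Lemma~\ref{lemma:betaentrybounds} (an entry is negated iff its value exceeds $p^2/2$), and then verify the single bound $i + \tfrac{p-1}{2}p \le \tfrac{p^2-1}{2} < \tfrac{p^2}{2}$ by direct computation.

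The paper takes a slightly different route: it argues via positions in the sorted tuple rather than via values. Using Lemma~\ref{lemma:alwaysnegatep-i} (equivalently, Lemma~\ref{lemma:p(p-i)position}), the entry $p(p-i)$ sits at index $p-i+1 > d$, so the first $d = \tfrac{p+1}{2}$ positions are filled by the $d$ smallest entries of the arithmetic progression $i, i+p, \ldots, i+(p-1)p$, namely $i,\ldots,i+\tfrac{p-1}{2}p$. Your approach is a bit more self-contained since it bypasses any tracking of where $p(p-i)$ lands and reduces everything to one inequality; the paper's approach has the virtue of reusing the structural information about the sorted tuple already established in the preceding lemma. Both arrive at the same conclusion with comparable effort.
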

\begin{proof}
    This follows from Remark \ref{rem:conjugationconvention} and Lemma \ref{lemma:alwaysnegatep-i} since these are the first half of the entries in $\beta_i$.
\end{proof}

\begin{corollary}\label{cor:firsthalfindecomposableHodgeclasses}
    Let $1\leq i \leq \frac{p-1}{2}$. Then 
    $$\nu_i=\omega_{i}\wedge\omega_{i+p}\wedge \omega_{i+2p}\wedge\cdots\wedge\omega_{i+p\frac{p-1}{2}}\wedge \overline\omega_{p\frac{p-1}{2}-i}\wedge\cdots\wedge \overline\omega_{p-i}\wedge \overline\omega_{pi}.$$
\end{corollary}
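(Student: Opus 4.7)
The plan is a direct, mostly bookkeeping computation: I start from the expression for $\nu_i$ given in Corollary \ref{cor:indecomposableHodgeclasses}, namely
$$\nu_i = \omega_i \wedge \omega_{i+p} \wedge \omega_{i+2p} \wedge \cdots \wedge \omega_{i+(p-1)p} \wedge \omega_{p(p-i)},$$
and apply the conjugation convention from Remark \ref{rem:conjugationconvention}, which replaces $\omega_{b}$ by $\overline{\omega}_{p^2 - b}$ whenever $b > p^2/2$. The strategy is to split the wedge factors into those that are \emph{not} negated and those that \emph{are} negated, then compute the conjugated indices explicitly.

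First, by Corollary \ref{cor:tuplebeta_notnegated}, the factors $\omega_{i}, \omega_{i+p}, \ldots, \omega_{i + \frac{p-1}{2}p}$ are left untouched, producing the initial segment of the claimed expression. The remaining factors are $\omega_{i+kp}$ for $k \in \{\tfrac{p+1}{2}, \tfrac{p+3}{2}, \ldots, p-1\}$, together with the final factor $\omega_{p(p-i)}$, and all of these must be negated under the convention.

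For each such $k$, the conjugation rule gives $p^2 - (i + kp) = p(p-k) - i$. As $k$ increases over $\{\tfrac{p+1}{2}, \ldots, p-1\}$, the quantity $p-k$ decreases over $\{\tfrac{p-1}{2}, \tfrac{p-3}{2}, \ldots, 1\}$, yielding the conjugated factors
$$\overline{\omega}_{p \frac{p-1}{2} - i} \wedge \overline{\omega}_{p \frac{p-3}{2} - i} \wedge \cdots \wedge \overline{\omega}_{p - i}.$$
Finally, Lemma \ref{lemma:alwaysnegatep-i} confirms that $\omega_{p(p-i)}$ is negated, and the convention produces $\overline{\omega}_{p^2 - p(p-i)} = \overline{\omega}_{pi}$. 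Wedging the non-negated block with this second block recovers the formula in the statement.

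No substantive obstacle is present: the substantive inputs — the non-negation of the first half of the tuple (Corollary \ref{cor:tuplebeta_notnegated}), the negation of the last entry (Lemma \ref{lemma:alwaysnegatep-i}), and the conjugation convention (Remark \ref{rem:conjugationconvention}) — are already established. The only care required is in verifying the index substitution $k \mapsto p-k$ so that the conjugated factors appear in the order written in the displayed expression, and in confirming that the total count of $\tfrac{p+1}{2}$ unnegated plus $\tfrac{p+1}{2}$ negated factors correctly reproduces the $2d = p+1$ wedge factors of the original $\nu_i$.
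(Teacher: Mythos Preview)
Your proposal is correct and follows essentially the same approach as the paper's own proof, which simply cites Corollary~\ref{cor:tuplebeta_notnegated} and the correspondence in Equation~\eqref{eqn:tupleHodgecorrespondence}. You have merely unpacked the bookkeeping in more detail, explicitly computing the conjugated indices $p^2-(i+kp)=p(p-k)-i$ and invoking Lemma~\ref{lemma:alwaysnegatep-i} for the final factor, which is exactly what the paper's terse proof leaves to the reader.
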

\begin{proof}
    This follows from Corollary \ref{cor:tuplebeta_notnegated} and the correspondence in Equation \eqref{eqn:tupleHodgecorrespondence}.
\end{proof}

\begin{example}\label{ex:Hodgeclasses_m25}
Let $m=25$. The tuples listed in Example \ref{ex:indecomposabletuples_m25} are the only indecomposable tuples for $m$ and they are all of the form $\beta_i$ in Equation \eqref{eqn:ShiodaTuple} with $1\leq i \leq 4$. Following the conventions described in Remark \ref{rem:conjugationconvention}, we select the first two tuples and adjust each entry of the tuple whose value is greater than $\frac{m}{2}$ to obtain
\begin{align*}
    \beta_1&=(1, 6, 11, -9, -5, -4)&
    \beta_2&=(2, 7, 12, -10, -8, -3).
\end{align*}
These correspond to the Hodge classes
\begin{align*}
    \nu_1&=\omega_1\wedge\omega_6\wedge\omega_{11}\wedge\overline\omega_9\wedge\overline\omega_5\wedge\overline\omega_4&
    \nu_2&=\omega_2\wedge\omega_7\wedge\omega_{12}\wedge\overline\omega_{10}\wedge\overline\omega_8\wedge\overline\omega_3.
\end{align*}
Let $U\in \U(1)^g$. From Equation \eqref{eqn:Hodgegroupaction} we obtain the following relations among entries of U:
\begin{align*}
    u_{11}&=\overline u_1 u_4 u_5\overline u_6 u_9&
    u_{12}&=\overline u_2 u_3 \overline u_7 u_8 u_{10}.
\end{align*}
\end{example}

\section{The Sato-Tate Group of $\Jac(C_{p^2})$}\label{sec:SatoTateGroup}

In this section we determine the identity component and the component group of the Sato-Tate group of $\Jac(C_{p^2})$. This Jacobian variety is degenerate by \cite[Theorem 1.1]{GoodsonDegeneracy2024}, and we gave a description of the indecomposable classes of the Hodge ring in Corollary \ref{cor:indecomposableHodgeclasses}. These indecomposable Hodge classes are used when computing the identity component of the Sato-Tate group.

First, recall from Section \ref{subsec:CurveJacbackground} that the dimension of $\Jac(C_{p^2})$ is $g=\frac{p^2-1}{2}$. From Lemma \ref{lemma:decomposition} we have that $\Jac(C_{p^2})\sim \Jac(C_p)\times X_{p^2},$ where $X_{p^2}$ is an absolutely simple abelian variety of dimension $g'=\frac{p(p-1)}{2}$. The first result of this section gives a description of the identity component of the Sato-Tate group of $\Jac(C_{p^2})$.

\begin{proposition}\label{prop:ST_identitycomponent}
    The identity component of the Sato-Tate group of $\Jac(C_{p^2})$ is isomorphic to $\U(1)^{g'}$. We can identify elements of the identity component with matrices $U=\diag(U_1,U_2,\ldots, U_g)$ in $\U(1)^g$ where 
    \begin{equation}\label{eqn:identitycomponentrelation}
        U_{{i+p\frac{p-1}{2}}}=\overline U_{i} \overline U_{i+p} \overline U_{i+2p}\cdots \overline U_{i+p\frac{p-3}{2}}  U_{p\frac{p-1}{2}-i} \cdots  U_{p-i}  U_{pi}
    \end{equation}
    for $1\leq i\leq \frac{p-1}{2}$.
\end{proposition}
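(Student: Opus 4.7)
The plan is to derive Equation \eqref{eqn:identitycomponentrelation} by applying the invariance theorem (Theorem \ref{thm:Hodge}) to the indecomposable Hodge classes listed in Corollary \ref{cor:firsthalfindecomposableHodgeclasses}, with the abstract identification $\ST^0(\Jac(C_{p^2}))\simeq \U(1)^{g'}$ following quickly from Corollary \ref{cor:HodgeGroup}: since $\Jac(C_{p^2})$ is of CM type, its Hodge group is a compact torus and $\ST^0$ can be identified with the image in $\USp(2g)$ of a maximal compact subgroup of that torus, which Corollary \ref{cor:HodgeGroup} realizes as $\U(1)^{g'}$. The remainder of the proof is devoted to identifying the correct copy of $\U(1)^{g'}$ inside $\U(1)^{g}$ by pinning down its defining relations.

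To obtain those relations, I would parametrize a general element of $\U(1)^{g}\subset \USp(2g)$, consistent with Equation \eqref{eqn:alphaendomorphism}, as
$$U=\diag(u_1,\overline u_1,u_2,\overline u_2,\ldots,u_g,\overline u_g),\qquad u_j\overline u_j=1.$$
Such a $U$ acts on $\omega_j$ by multiplication by $u_j$ and on $\overline \omega_j$ by $\overline u_j$, hence on any wedge of these differentials as multiplication by the product of the corresponding eigenvalues. By Theorem \ref{thm:Hodge} every element of the Hodge group must fix each Hodge class; applied to $\nu_i$ from Corollary \ref{cor:firsthalfindecomposableHodgeclasses}, this produces the scalar equation
$$u_i\, u_{i+p}\cdots u_{i+p(p-1)/2}\cdot \overline u_{p(p-1)/2-i}\cdots \overline u_{p-i}\cdot \overline u_{pi}=1,$$
and solving for $u_{i+p(p-1)/2}$ via $u_j^{-1}=\overline u_j$ yields Equation \eqref{eqn:identitycomponentrelation} exactly, under the natural reading $\overline U_j=U_j^{-1}$.

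Finally, I would verify that these $(p-1)/2$ relations define precisely the image of $\Hg$ inside $\U(1)^{g}$ rather than a strictly larger subtorus. By Lemma \ref{lemma:distinctbeta} the indices $\{i+p(p-1)/2 : 1\leq i\leq (p-1)/2\}$ are pairwise distinct, so the relations are algebraically independent and eliminate exactly $g-g'=(p-1)/2$ coordinates, carving out a connected compact subtorus of real dimension $g'$; since this subtorus already contains the image of $\Hg\simeq \U(1)^{g'}$, equality follows by a comparison of dimensions and connected components. I do not anticipate a serious obstacle here: the only delicate point is the eigenvalue bookkeeping under the convention of Remark \ref{rem:conjugationconvention}, whereby differentials $\omega_j$ with $j>p^2/2$ must be rewritten as $\overline\omega_{p^2-j}$, but this has already been performed in Corollary \ref{cor:firsthalfindecomposableHodgeclasses}, so in practice one merely reads off the eigenvalues from the stated expression for each $\nu_i$.
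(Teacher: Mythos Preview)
Your proposal is correct and follows essentially the same route as the paper: both arguments identify $\ST^0(\Jac(C_{p^2}))$ with the Hodge group (the paper via \cite[Lemma 2.8]{FKRS2012}, \cite[Remark 4.8]{Banaszak2015}, and \cite{Pohlmann1968}), apply Theorem \ref{thm:Hodge} to the indecomposable classes $\nu_i$ of Corollary \ref{cor:firsthalfindecomposableHodgeclasses} to extract the scalar relations, solve each for $u_{i+p(p-1)/2}$, and then use the a priori knowledge $\Hg\simeq\U(1)^{g'}$ from Corollary \ref{cor:HodgeGroup} together with the count $g-g'=(p-1)/2$ to conclude that these relations are exhaustive. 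Your independence argument via Lemma \ref{lemma:distinctbeta} is slightly more explicit than the paper's dimension count, but the substance is the same.
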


\begin{proof}

Let $d=\frac{p+1}{2}$. Corollary \ref{cor:indecomposableHodgeclasses} gives a complete description of the indecomposable classes in this codimension and by Theorem \ref{thm:indecomposabletuples}, these are the only indecomposable classes in $\mathscr B(\Jac(C_{p^2}))$. Theorem \ref{thm:Hodge} (i.e., \cite[Theorem 17.3.3]{BirkenhakeLange2004}) describes the relationship between the Hodge group $\Hg(\Jac(C_{p^2}))$ and the Hodge ring $\mathscr B(\Jac(C_{p^2}))$, and this relationship has been made more explicit in Equation \eqref{eqn:Hodgegroupaction}.

Let $1\leq i\leq \frac{p-1}{2}$ and let $U$ be an element of the Hodge group $\Hg(\Jac(C_{p^2}))$. Consider the indecomposable classes given in Corollary \ref{cor:firsthalfindecomposableHodgeclasses}:
    $$\nu_i=\omega_{i}\wedge\omega_{i+p}\wedge \omega_{i+2p}\wedge\cdots\wedge\omega_{i+p\frac{p-1}{2}}\wedge \overline\omega_{p\frac{p-1}{2}-i}\wedge\cdots\wedge \overline\omega_{p-i}\wedge \overline\omega_{pi}.$$    
    From Equation \eqref{eqn:Hodgegroupaction} we have the action 
\begin{align*}
    U\cdot \nu_i &= U\cdot (\omega_{i}\wedge\omega_{i+p}\wedge \omega_{i+2p}\wedge\cdots\wedge\omega_{i+p\frac{p-1}{2}}\wedge \overline\omega_{p\frac{p-1}{2}-i}\wedge\cdots\wedge \overline\omega_{p-i}\wedge \overline\omega_{pi})\\
    &=(u_{i} u_{i+p} u_{i+2p}\cdots u_{i+p\frac{p-1}{2}}\overline u_{p\frac{p-1}{2}-i} \cdots\overline u_{p-i} \overline u_{pi})\nu_i.
\end{align*}

By Theorem \ref{thm:Hodge}, the Hodge group fixes elements in the Hodge ring and so we must have 
    $$u_{i} u_{i+p} u_{i+2p}\cdots u_{i+p\frac{p-1}{2}}\overline u_{p\frac{p-1}{2}-i} \cdots\overline u_{p-i} \overline u_{pi}=1.$$
One can easily show that the largest subscript appearing in the above equation is $i+p\frac{p-1}{2}$, and so we write 
    $$u_{i+p\frac{p-1}{2}}= \overline u_{i} \overline u_{i+p} \overline u_{i+2p}\cdots \overline u_{i+p\frac{p-3}{2}}  u_{p\frac{p-1}{2}-i} \cdots  u_{p-i}  u_{pi}.$$
This also yields the relation
    $$\overline u_{i+p\frac{p-1}{2}}= u_{i}u_{i+p} u_{i+2p}\cdots u_{i+p\frac{p-3}{2}}  \overline u_{p\frac{p-1}{2}-i} \cdots  \overline u_{p-i} \overline u_{pi}.$$

Thus, the matrix $U_{i+p\frac{p-1}{2}}$ must be as stated in Equation \eqref{eqn:identitycomponentrelation}. Furthermore, this accounts for all relations in the Hodge group since we have $\Hg(\Jac(C_{p^2}))\simeq \U(1)^{g'}$. This concludes the proof since the identity component of the Sato-Tate group is isomorphic to the Hodge group (see \cite[Lemma 2.8]{FKRS2012} and \cite[Remark 4.8]{Banaszak2015}, where the latter is relevant since the Mumford-Tate conjecture holds for CM abelian varieties by \cite{Pohlmann1968}).
\end{proof}

\begin{example}\label{ex:IDcomponent_m25}
 Let $p^2=25$. Proposition \ref{prop:ST_identitycomponent} tells us that the identity component $\ST^0(\Jac(C_{p^2}))$ is isomorphic to $\U(1)^{10}$. We can identify elements of the identity component with matrices of the form
 \begin{equation}\label{eqn:IDcomponent_m25}
     U=\diag(U_1, U_2, \ldots, U_{10},\overline U_{1}U_4U_5\overline U_{6}U_9,\overline U_{2}U_3\overline U_{7}U_8U_{10})
 \end{equation}
 in $\USp(24).$
\end{example}

We now turn to the component group of the Sato-Tate group. Though $\Jac(C_{p^2})$ is degenerate, we can use the usual techniques from the literature for computing the component group of the Sato-Tate group (see the discussion in Section \ref{subsec:STgroupbackground}). Our main goal at this point is to find a generator of the component group of the Sato-Tate group.

\begin{definition}\label{def:gammamatrix}
Let $p$ be an odd prime, $\tau_a$ be a generator of $\Gal(\Q(\zeta_{p^2})/\Q)$, and $S=\{1,\ldots, g\}$, where $g=\frac{p^2-1}{2}$ is the genus of the curve $C_{p^2}$. We define the matrix $\gamma$ in $\USp(2g)$ to be the $2g\times 2g$ block matrix whose $ij^{th}$ block is given by
\begin{equation*}
    \gamma[i,j] = \begin{cases}
        I & \text{if $j=\langle ai\rangle_{p^2}$,} \\ 
        J & \text{if $j=p^2-\langle ai\rangle_{p^2}$,} \\
        0 & \text{otherwise}.
    \end{cases}
\end{equation*}
\end{definition}

\begin{example}\label{ex:gamma_m25}
Let $p^2=25$. Using $\tau_2$ as a generator for the Galois group $\Gal(\Q(\zeta_{25})/\Q)$, Definition \ref{def:gammamatrix} yields the matrix
    \begin{equation}\label{eqn:gamma_m25}
    \gamma = {\small\left(\begin{array}{rrrrrrrrrrrr}
         \greyzero & I & \greyzero & \greyzero & \greyzero & \greyzero & \greyzero & \greyzero & \greyzero & \greyzero & \greyzero & \greyzero  \\
         \greyzero & \greyzero & \greyzero & I & \greyzero & \greyzero & \greyzero & \greyzero & \greyzero & \greyzero & \greyzero & \greyzero  \\
         \greyzero & \greyzero & \greyzero & \greyzero & \greyzero & I & \greyzero & \greyzero & \greyzero & \greyzero & \greyzero & \greyzero  \\
         \greyzero & \greyzero & \greyzero & \greyzero & \greyzero & \greyzero & \greyzero & I & \greyzero & \greyzero & \greyzero & \greyzero  \\
         \greyzero & \greyzero & \greyzero & \greyzero & \greyzero & \greyzero & \greyzero & \greyzero & \greyzero & I & \greyzero & \greyzero  \\
         \greyzero & \greyzero & \greyzero & \greyzero & \greyzero & \greyzero & \greyzero & \greyzero & \greyzero & \greyzero & \greyzero & I \\
         \greyzero & \greyzero & \greyzero & \greyzero & \greyzero & \greyzero & \greyzero & \greyzero & \greyzero & \greyzero & J & \greyzero  \\
         \greyzero & \greyzero & \greyzero & \greyzero & \greyzero & \greyzero & \greyzero & \greyzero & J & \greyzero & \greyzero & \greyzero  \\
         \greyzero & \greyzero & \greyzero & \greyzero & \greyzero & \greyzero & J & \greyzero & \greyzero & \greyzero & \greyzero & \greyzero  \\
         \greyzero & \greyzero & \greyzero & \greyzero & J & \greyzero & \greyzero & \greyzero & \greyzero & \greyzero & \greyzero & \greyzero  \\
         \greyzero & \greyzero & J & \greyzero & \greyzero & \greyzero & \greyzero & \greyzero & \greyzero & \greyzero & \greyzero & \greyzero  \\
         J & \greyzero & \greyzero & \greyzero & \greyzero & \greyzero & \greyzero & \greyzero & \greyzero & \greyzero & \greyzero & \greyzero  \\
    \end{array}\right)}.
\end{equation}
\end{example}

We will show that the group $\langle \gamma \rangle$ is isomorphic to the component group of the Sato-Tate group, but we first compute the inverse of the matrix $\gamma$.

\begin{lemma}\label{lemma:gammainverse}
    Let $\gamma$ be the matrix described in Definition \ref{def:gammamatrix}. Then $\gamma^{-1}=\gamma^{T}$.
\end{lemma}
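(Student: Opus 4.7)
The approach is to show that $\gamma$ is a \emph{signed block permutation matrix}: in each block-row and each block-column there is a single nonzero $2\times 2$ block, which is either $I$ or $J$. Since $I$ and $J$ both satisfy $MM^T = I_2$ (because $J^2 = -I_2$, so $JJ^T = J(-J) = -J^2 = I_2$), computing $\gamma\gamma^T$ block by block will collapse to the identity.

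First I would make the block-support structure precise. For each $i \in \{1,\dots,g\}$, the definition selects the column index from among $\langle ai\rangle_{p^2}$ and $p^2 - \langle ai\rangle_{p^2}$. Because $\gcd(a,p^2)=1$ and $1\le i\le g<p^2$, the residue $\langle ai\rangle_{p^2}$ lies in $\{1,\dots,p^2-1\}$, and exactly one of $\langle ai\rangle_{p^2}$ and $p^2-\langle ai\rangle_{p^2}$ lies in $\{1,\dots,g\}$ (since $g=(p^2-1)/2$ and $p^2$ is odd). This defines a map $\sigma\colon\{1,\dots,g\}\to\{1,\dots,g\}$ recording the unique nonzero block-column in row $i$. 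The map $\sigma$ is a bijection: $\sigma(i_1)=\sigma(i_2)$ forces $ai_1\equiv \pm ai_2\pmod{p^2}$, and since $i_1,i_2\le g$ the minus case would require $i_1 = p^2-i_2 > g$, contradiction; so $i_1 = i_2$.

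Next I would compute $\gamma\gamma^T$ blockwise. The $(i,k)$-block is
\begin{equation*}
(\gamma\gamma^T)[i,k] \;=\; \sum_{j=1}^{g}\gamma[i,j]\,\gamma[k,j]^{T}.
\end{equation*}
The only $j$ contributing a nonzero left factor is $j=\sigma(i)$, and the right factor is nonzero only when $\sigma(k)=\sigma(i)$, i.e.\ when $k=i$ by bijectivity of $\sigma$. Hence the off-diagonal blocks vanish, and
\begin{equation*}
(\gamma\gamma^T)[i,i] \;=\; \gamma[i,\sigma(i)]\,\gamma[i,\sigma(i)]^{T}.
\end{equation*}
This is $I\cdot I = I$ in the case $\gamma[i,\sigma(i)]=I$, and $J\cdot J^{T} = -J^{2} = I$ in the case $\gamma[i,\sigma(i)]=J$. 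Either way, each diagonal block equals the $2\times 2$ identity, so $\gamma\gamma^T = I_{2g}$ and $\gamma^{-1}=\gamma^T$.

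The routine calculations are the two $2\times 2$ identities $II^T=I$ and $JJ^T=I$; the only substantive step is the combinatorial one, namely verifying that $\sigma$ is a well-defined bijection on $\{1,\dots,g\}$. I expect this bookkeeping — distinguishing which of $\langle ai\rangle_{p^2}$ and $p^2-\langle ai\rangle_{p^2}$ lands in the index range, and ruling out the ``minus'' collision — to be the only place where care is required.
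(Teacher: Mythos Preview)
Your proof is correct and follows the same approach as the paper: compute $\gamma\gamma^T$ blockwise and use $II^T=I$, $JJ^T=-J^2=I$. You are in fact more careful than the paper, which simply asserts the off-diagonal blocks vanish without spelling out the bijectivity of the row-to-column map $\sigma$ that you verify.
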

\begin{proof}
    First note that $J^T=-J=J^{-1}$. The result then follows by computing the $ij^{th}$ block of $\gamma\cdot \gamma^T$ as follows
    $$\gamma\cdot \gamma^T[i,j] =\sum_{k=1}^g \gamma[i,k]\gamma^T[k,j]=\begin{cases}
        I^2 \text{ or } JJ^T & \text{if $i=j$,} \\
        0 & \text{if $i\not=j$}.\end{cases}$$
\end{proof}

\begin{theorem}\label{thm:STgroup}
    Let $g =\frac{p^2-1}{2}$ be the genus of the curve $C_{p^2}$ and let $g'=\frac{p(p-1)}{2}$. The Sato-Tate group of $\Jac(C_{p^2})$, up to isomorphism in $\USp(2g)$, is given by
    \begin{equation}\label{eqn:STgroup}
        \ST(\Jac(C_{p^2})) \simeq \langle \U(1)^{g'},\gamma \rangle,
    \end{equation}
    where the embedding of $\U(1)^{g'}$ in $\USp(2g)$ is described in Proposition \ref{prop:ST_identitycomponent}.
\end{theorem}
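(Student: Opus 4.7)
The plan is to use the twisted Lefschetz group machinery from Section~\ref{subsec:STgroupbackground}, combined with Proposition~\ref{prop:ST_identitycomponent}. By the discussion in Section~\ref{subsec:CurveJacbackground}, the endomorphism field of $\Jac(C_{p^2})$ coincides with its connected monodromy field, both equal to $\Q(\zeta_{p^2})$; hence the component group of $\ST(\Jac(C_{p^2}))$ is isomorphic to $\Gal(\Q(\zeta_{p^2})/\Q) \simeq (\Z/p^2\Z)^\times$, which is cyclic of order $\phi(p^2) = p(p-1)$, and can be computed via $\TL(\Jac(C_{p^2}))$. Since Proposition~\ref{prop:ST_identitycomponent} already identifies the identity component as $\U(1)^{g'}$, it suffices to exhibit a single matrix $\gamma \in \USp(2g)$ sitting over a generator $\tau_a$ of $\Gal(\Q(\zeta_{p^2})/\Q)$ in the twisted Lefschetz group; all other cosets are then forced.

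First I would verify that $\gamma \in \USp(2g)$. Lemma~\ref{lemma:gammainverse} supplies $\gamma^{-1} = \gamma^T$, so $\gamma$ is orthogonal; a direct block computation then confirms $\gamma^T \Omega \gamma = \Omega$ for the standard symplectic form $\Omega = \diag(J, J, \ldots, J)$, since every non-zero block of $\gamma$ is either $I$ or $J$, and these satisfy $I^T J I = J$ and $J^T J J = J$.

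The key step is to show $\gamma$ lies in the coset $\LL(\Jac(C_{p^2}))(\tau_a)$, i.e.~that $\gamma \alpha \gamma^{-1} = \tau_a(\alpha)$ on the CM endomorphism $\alpha = \diag(Z, Z^2, \ldots, Z^g)$ of Equation~\eqref{eqn:alphaendomorphism}. Since $\alpha$ generates the CM algebra $\Q(\zeta_{p^2}) \hookrightarrow \End(\Jac(C_{p^2})_{\overline\Q}) \otimes \Q$, verifying the relation on $\alpha$ alone suffices. I would exploit the block structure of $\gamma$: for each $i$ there is a unique index $k_i$ with $\gamma[i, k_i] \ne 0$, equal to $\langle ai\rangle_{p^2}$ (with block $I$) when $\langle ai\rangle_{p^2} \le g$, and to $p^2 - \langle ai\rangle_{p^2}$ (with block $J$) otherwise. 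The $(i,i)$ block of $\gamma \alpha \gamma^{-1}$ is then $\gamma[i,k_i]\, Z^{k_i}\, \gamma[i,k_i]^{-1}$, which reduces to $Z^{\langle ai\rangle_{p^2}}$ in the first case and to $J\, Z^{p^2 - \langle ai\rangle_{p^2}}\, J^{-1} = Z^{-(p^2 - \langle ai\rangle_{p^2})}$ in the second, using the easily-checked identity $J Z^j J^{-1} = Z^{-j}$ (since $J$ swaps the two diagonal entries of $Z$). After reconciling the signs coming from $Z^{p^2} = -I$, both cases will match the $i$-th block of $\tau_a(\alpha)$.

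The main technical obstacle will be this sign reconciliation, together with the off-diagonal verification. Because $Z = -\diag(\zeta_{p^2}, \overline\zeta_{p^2})$ has order $2p^2$ rather than $p^2$, the naive Galois transform of $Z^i$ differs from $Z^{\langle ai\rangle_{p^2}}$ by a controlled sign depending on the parities of $i$, $a$, and the quotient $(ai - \langle ai\rangle_{p^2})/p^2$; these signs must be shown to cancel. A subsidiary check is that the supports $\{k_i\}$ are pairwise distinct, so the off-diagonal blocks of $\gamma \alpha \gamma^{-1}$ automatically vanish. Once these checks are complete, $\gamma$ represents $\tau_a$ in the twisted Lefschetz group, and since $\tau_a$ generates $\Gal(\Q(\zeta_{p^2})/\Q)$, the equality $\ST(\Jac(C_{p^2})) = \langle \U(1)^{g'}, \gamma\rangle$ inside $\USp(2g)$ follows.
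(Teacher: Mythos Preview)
Your proposal is correct and follows essentially the same route as the paper: both reduce to showing that $\gamma$ lies in the twisted Lefschetz coset $\LL(\Jac(C_{p^2}))(\tau_a)$ via the block identity $\gamma\alpha\gamma^{-1}={}^{\tau_a}\alpha$, after using the equality of the endomorphism and connected monodromy fields to identify the component group with the cyclic group $\Gal(\Q(\zeta_{p^2})/\Q)$. The only minor differences are cosmetic: the paper adds an explicit verification that $\gamma$ has order $\phi(p^2)$ in the quotient (which you instead read off directly from the Galois isomorphism), and it outsources your sign-reconciliation concern to an external reference rather than working it out in place.
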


\begin{proof}

Since the component group of the Sato-Tate group is isomorphic to $\Gal(\Q(\zeta_{p^2})/\Q)$, which is cyclic, it suffices to prove that (1) $\gamma \in \ST(\Jac(C_{p^2}))$ and (2) its order is $\phi(p^2)$ in the component group $\ST(\Jac(C_{p^2}))/\ST^0(\Jac(C_{p^2}))$.

Let $\tau_a$ be a generator of $\Gal(\Q(\zeta_{p^2})/\Q)$. For (1), it suffices to show that $\gamma$ is an element of the twisted Lefschetz group of $\Jac(C_{p^2})$, i.e. that
$$\gamma \alpha \gamma^{-1}={}^{\tau_a} \alpha,$$
where $\alpha$ is as defined in Equation \eqref{eqn:alphaendomorphism}. This is easily done by noting that $J\matrixtwo{\zeta}{}{}{\overline \zeta}J^T=\matrixtwo{\overline \zeta}{}{}{\zeta}$ and then using the same arguments as in the proof of Theorem 3.4 of \cite{GoodsonHoque2024}.

Now let $n=\phi(p^2)$, which is the order of $\tau_a$ in $\Gal(\Q(\zeta_{p^2})/\Q)$, and let $r$ be the smallest positive integer such that $\gamma^r\in \ST^0(\Jac(C_{p^2}))$. Then $\gamma^r$ is a diagonal matrix, and each $2\times2$ block entry satisfies $\gamma^r[i,i]=\pm I$. Thus,
$${}^{\tau_a^r} \alpha=\gamma^r \alpha \gamma^{-r}=\alpha,$$
and so $r$ must be a multiple of the order of $\tau_a$. But it is also true that 
$$\gamma^n \alpha \gamma^{-n}={}^
{\tau_a^n} \alpha=\alpha,$$
and so $\gamma^n$ is an element of $\ST^0(\Jac(C_{p^2}))$. Hence, $r=\phi(p^2)$ and we have verified (2).

\end{proof}

\section{Moment Statistics}\label{sec:momentstatistics}

With the results of the previous sections, we can now describe the limiting distributions of coefficients of the normalized L-polynomials of the curves $C_{p^2}:y^2=x^{p^2}-1$. A common way to describe a distribution is through its moment statistics sequence. The generalized Sato-Tate conjecture, which is known to be true in the CM case \cite{Johansson2017}, predicts that the moment sequence for the distributions of coefficients of the normalized L-polynomials converges to the moment sequence of the Sato-Tate group. The moment statistics of the Sato-Tate group are often referred to as the \emph{theoretical moments} while those coming from the coefficients of the normalized L-polynomials are called the \emph{numerical moments}.

In what follows, we give some background information on moment statistics in the context of Sato-Tate groups (for more background we refer the reader to \cite[Section 5]{EmoryGoodson2024} and \cite[Sections 4.3, 4.4]{SutherlandNotes}). We then discuss the computation methods for a specific example to demonstrate how the degeneracy of the Sato-Tate groups of our curves is taken into account.

\subsection{Computing theoretical moments}\label{sec:computingmoments}

The theoretical moments are computed from the coefficients of the characteristic polynomial of random conjugacy classes in the Sato–Tate group. The isomorphism in Theorem \ref{thm:STgroup} tells us that we can compute the moment sequence by working with the matrix group given in Equation \eqref{eqn:STgroup}.

Recall that for the unitary group $\U(1)$, the trace map $\tr$ on a random element $U\in \U(1)$ is given by $z:=\tr(U)=u+\overline{u}=2\cos(\theta)$, where $u=e^{i\theta}$. Then $dz=-2\sin(\theta)d\theta$ and 
$$\mu_{\U(1)}= \frac1{2\pi} \frac{dz}{\sqrt{4-z^2}}=\frac1{2\pi} d\theta$$
gives a uniform measure of $\U(1)$ on the eigenangle $\theta\in[-\pi,\pi]$ (see \cite[Section 2]{SutherlandNotes}). The $n^{th}$ moment $M_n[\mu]$  is the expected value of $\phi_n:z\mapsto z^n$ with respect to $\mu$, computed as
$$M_n[\mu] = \int_{I} z^n\mu(z),$$
where $I=[-2,2]$.

We extend this to the group given in Equation \eqref{eqn:STgroup} as follows. Let $U$ be a random matrix in the identity component described in Proposition \ref{prop:ST_identitycomponent} and $\gamma$ be the matrix defined in Definition \ref{def:gammamatrix}. Let $g_i^k$ denote the coefficient of $T^i$ in the characteristic polynomial $P(T)$ of $U\gamma^k$, where $0\leq k<\phi(p^2)$ (the order of $\gamma$). The $n^{th}$ moment $M_n[\mu_i^k]$ is the expected value of $(g_i^k)^n$, and we can compute this by integrating against the Haar measure. We obtain moment statistics for the full Sato-Tate group of $\Jac(C_{p^2})$ by taking the average of the moments for $U\gamma^k$.

\subsection{Example: $p^2=25$}
\label{sec:tableshistograms}

In this section we specialize to the genus 12 curve $C_{25}: y^2=x^{25}-1$. We use the techniques described in Section \ref{sec:computingmoments} to compute the theoretical moments coming from the Sato-Tate group of $\Jac(C_{p^2})$. Let $U$ be a random matrix of the form in Equation \eqref{eqn:IDcomponent_m25} and let $\gamma$ be the matrix in Equation \eqref{eqn:gamma_m25}. We compute the characteristic polynomial of each matrix $U\gamma^k$, where $0\leq k< \phi(25)=20$. We find that $g_1^k$-coefficient (corresponding to the characteristic polynomial of $U\gamma^k$) equals 0 unless $k$ is a multiple of 4, and the $g_1^k$-coefficient with the most terms occurs when $k=0$. To compute the moment statistics for these values of $k$, we integrate the expected value of $(g_1^k)^n$ against the Haar measure. 

The most complicated of these is in the $k=0$ case, where $M_n[\mu_1^0]$ is equal to the value of the integral
{\small\begin{align*}
    \frac{2^n}{(2\pi)^{10}}\int_0^{2\pi}\cdots\int_0^{2\pi} (
\cos\left(\theta_1\right)+
&\cdots +\cos\left(\theta_{10}\right) \\
&+\cos\left(-\theta_1+\theta_4+\theta_5-\theta_6+\theta_9\right)+\cos\left(-\theta_2+\theta_3-\theta_7+\theta_8+\theta_{10}\right))^n \,d\theta_1\cdots d\theta_{10}.
\end{align*}}
{\flushleft In the above integral we can see the effect of the degeneracy of the Sato-Tate group: the last two terms in the integrand show the additional relations among elements of the identity component. }

To compute $M_n[\mu_1^k]$ for $k=4,8,12,16$, we integrate
{\small\begin{align*}
    \frac{(\pm 2)^n}{(2\pi)^{2}}\int_0^{2\pi}\int_0^{2\pi} (
\cos\left(\theta_5\right) +\cos\left(\theta_{10}\right))^n \,d\theta_5d\theta_{10},
\end{align*}}
{\flushleft where the numerator of the coefficient is $2^n$ when $k=4,12$ and $(-2)^n$ when $k=8,16$. We compute the moment statistics $M_n[\mu_1]$ of the full Sato-Tate group by averaging over the size of the group.} 

The moment statistics we obtained were computed in Python and are displayed in Table \ref{table:m25a1}. The Python code implemented the \texttt{nquad} function from the \texttt{integrate} sub-package of the \texttt{SciPy} library and the symbolic \texttt{integrate} function from Python's \texttt{SymPy} library.
We found that the numerical integration function proved to be efficient when dealing with simple integrals, while symbolic integration was more efficient for the complicated integral we obtained for $k=0$.

In theory, this process can be completed for each of the coefficients $g_i^k$ of the characteristic polynomial of $U\gamma^k$. However, in practice, the coefficients become extremely unwieldy as $i$ increases, particularly for $k=0$. For example, we found that the $g_2^0$ coefficient has 265 terms which can be grouped as 132 pairs of the form $z, \overline{z}$, where $z$ is a root of unity, and one constant term that equals 12. The $g_i^0$ coefficients for larger values of $i$ have even more terms. Thus, it was not feasible to compute moment statistics for these coefficients of the characteristic polynomial.

The numerical moments coming from the $a_1$-coefficient of the normalized L-polynomial were computed for primes $p<2^{25}$ using the results of \cite{Sutherland2020}.  

\begin{table}[h]
\begin{tabular}{|c|p{2cm}|p{2cm}|p{2cm}|p{2cm}|p{2cm}|}
\hline
 &$M_2$ & $M_4$ & $M_6$ & $M_8$ \\ 
\hline
$a_1$& 2.009 & 90.848 & 9452.007 & 1438061.241\\
\hline
$\mu_1$ & 2 & 90 & 9344 & 1419866\\
\hline
\end{tabular}
\caption{Table of $a_1$- and $\mu_1$-moments for $C_{25}: y^2=x^{25}-1$ (with $p<2^{25}$).}\label{table:m25a1}
\end{table}

\vspace{-.1in}
In Figure \ref{fig:histogram_m25} below we display a histogram of the distribution of the numerical $a_1$-coefficients. 

\begin{figure}[h]
        \centering
        {
\setlength{\fboxsep}{0pt}\fbox{\includegraphics[width=.55\textwidth]{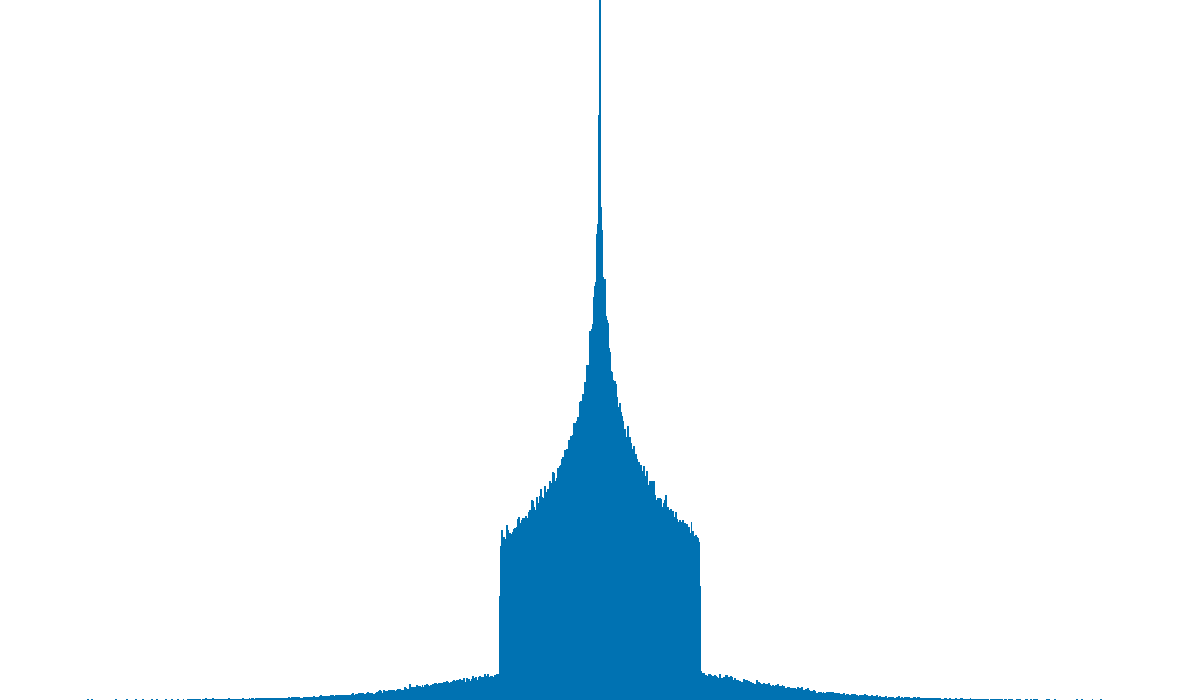}}}
        \caption{Histogram of the $a_1$-coefficients for $C_{25}: y^2=x^{25}-1$.}\label{fig:histogram_m25}
\end{figure}

\bibliographystyle{alpha}
\bibliography{biblio}

\begin{thebibliography}{FKRS12}

\bibitem[BK15]{Banaszak2015}
Grzegorz Banaszak and Kiran~S. Kedlaya.
\newblock An algebraic {S}ato-{T}ate group and {S}ato-{T}ate conjecture.
\newblock {\em Indiana Univ. Math. J.}, 64(1):245--274, 2015.

\bibitem[BL04]{BirkenhakeLange2004}
Christina Birkenhake and Herbert Lange.
\newblock {\em Complex abelian varieties}, volume 302 of {\em Grundlehren der mathematischen Wissenschaften [Fundamental Principles of Mathematical Sciences]}.
\newblock Springer-Verlag, Berlin, second edition, 2004.

\bibitem[EG22]{EmoryGoodson2022}
Melissa Emory and Heidi Goodson.
\newblock Sato-{T}ate distributions of {$y^2 = x^p - 1$} and {$y^2 = x^{2p}-1$}.
\newblock {\em J. Algebra}, 597:241--265, 2022.

\bibitem[EG24]{EmoryGoodson2024}
Melissa Emory and Heidi Goodson.
\newblock Nondegeneracy and {S}ato-{T}ate distributions of two families of {J}acobian varieties.
\newblock {\em arXiv e-prints}, 2024.
\newblock arxiv:2401.06208 (21 pages).

\bibitem[FGL16]{FGL2016}
Francesc Fit\'e, Josep Gonz\'alez, and Joan-Carles Lario.
\newblock Frobenius distribution for quotients of {F}ermat curves of prime exponent.
\newblock {\em Canad. J. Math.}, 68(2):361--394, 2016.

\bibitem[FKRS12]{FKRS2012}
Francesc Fit{\'e}, Kiran~S. Kedlaya, V{\'{\i}}ctor Rotger, and Andrew~V. Sutherland.
\newblock Sato-{T}ate distributions and {G}alois endomorphism modules in genus 2.
\newblock {\em Compos. Math.}, 148(5):1390--1442, 2012.

\bibitem[GGL24]{GalleseGoodsonLombardo1}
Andrea Gallese, Heidi Goodson, and Davide Lombardo.
\newblock Monodromy groups and exceptional hodge classes, i: Fermat jacobians.
\newblock {\em arXiv e-prints}, 2024.
\newblock arxiv:2405.20394 (95 pages).

\bibitem[GGL25]{GalleseGoodsonLombardo2}
Andrea Gallese, Heidi Goodson, and Davide Lombardo.
\newblock Monodromy groups and exceptional hodge classes, ii: Sato-tate groups.
\newblock {\em arXiv e-prints}, 2025.
\newblock arxiv:2507.02535.

\bibitem[GH24]{GoodsonHoque2024}
Heidi Goodson and Rezwan Hoque.
\newblock Sato-tate groups and distributions of $y^\ell=x(x^\ell-1)$, 2024.
\newblock to appear in \emph{Acta Arithmetica}.

\bibitem[Goo23]{GoodsonCatalan}
Heidi Goodson.
\newblock Sato-{T}ate distributions of {C}atalan curves.
\newblock {\em J. Th\'{e}or. Nombres Bordeaux}, 35(1):87--113, 2023.

\bibitem[Goo24]{GoodsonDegeneracy2024}
Heidi Goodson.
\newblock {An Exploration of Degeneracy in Abelian Varieties of Fermat Type}.
\newblock {\em Experimental Mathematics}, pages 1--17, June 2024.

\bibitem[Joh17]{Johansson2017}
Christian Johansson.
\newblock On the {S}ato-{T}ate conjecture for non-generic abelian surfaces.
\newblock {\em Trans. Amer. Math. Soc.}, 369(9):6303--6325, 2017.
\newblock With an appendix by Francesc Fit\'{e}.

\bibitem[LS18]{LarioSomoza2018}
Joan-Carles Lario and Anna Somoza.
\newblock The {S}ato-{T}ate conjecture for a {P}icard curve with complex multiplication (with an appendix by {F}rancesc {F}it\'{e}).
\newblock In {\em Number theory related to modular curves---{M}omose memorial volume}, volume 701 of {\em Contemp. Math.}, pages 151--165. Amer. Math. Soc., Providence, RI, 2018.

\bibitem[MP22]{Muller2017}
Nicolas M\"uller and Richard Pink.
\newblock Hyperelliptic curves with many automorphisms.
\newblock {\em Int. J. Number Theory}, 18(4):913--930, 2022.

\bibitem[Poh68]{Pohlmann1968}
Henry Pohlmann.
\newblock Algebraic cycles on abelian varieties of complex multiplication type.
\newblock {\em Ann. of Math. (2)}, 88:161--180, 1968.

\bibitem[Shi82]{Shioda82}
Tetsuji Shioda.
\newblock Algebraic cycles on abelian varieties of {F}ermat type.
\newblock {\em Math. Ann.}, 258(1):65--80, 1981/82.

\bibitem[Sut19]{SutherlandNotes}
Andrew~V. Sutherland.
\newblock Sato-{T}ate distributions.
\newblock In {\em Analytic methods in arithmetic geometry}, volume 740 of {\em Contemp. Math.}, pages 197--248. Amer. Math. Soc., Providence, RI, 2019.

\bibitem[Sut20]{Sutherland2020}
Andrew~V. Sutherland.
\newblock Counting points on superelliptic curves in average polynomial time.
\newblock In {\em A{NTS} {XIV}---{P}roceedings of the {F}ourteenth {A}lgorithmic {N}umber {T}heory {S}ymposium}, volume~4 of {\em Open Book Ser.}, pages 403--422. Math. Sci. Publ., Berkeley, CA, 2020.

\end{thebibliography}

\end{document}